\newtheorem{thm}{Theorem}[section]
\newtheorem{cor}[thm]{Corollary}
\newtheorem{prop}[thm]{Proposition}
\newtheorem{lem}[thm]{Lemma}
\theoremstyle{definition}
\newtheorem{defn}[thm]{Definition}
\theoremstyle{remark}
\newtheorem{rem}[thm]{Remark}
\let\c@equation\c@thm
\numberwithin{equation}{section}
\title[Parabolic system with degenerate coefficients]{Global solution for a coupled parabolic system with degenerate coefficients and time-weighted sources}
\author{Ricardo Castillo}
\address{Departamento de Matem\'atica, Facultad de Ciencias, Universidad del B\'io-B\'io, Concepci\'on, Chile}
\curraddr{}
\email{rcastillo@ubiobio.com}
\thanks{R. Castillo was supported by ANID-FONDECYT project No. 11220152 }
\author{Omar Guzm\'an-Rea}
\address{Departamento de Matem\'atica, Universidade de Bras\'ilia, Brasil\'ia - DF, Brazil}
\curraddr{}
\email{omar.grea@gmail.com}
\thanks{O. Guzm\'an-Rea was supported by CNPq/Brazil, 166685/2020-8}
\author{Miguel Loayza}
\address{Departamento de Matem\'atica, Universidade Federal de Pernambuco, Recife, Pernambuco, Brazil}
\curraddr{}
\email{miguel.loayza@ufpe.br}
\thanks{M. Loayza was partially supported by CAPES-PRINT, 88881.311964/2018-01, MATHMSUD, 88881.520205/2020-01, 21-MATH-03}
\author{Mar\'ia Zegarra}
\address{Departamento de Matem\'atica, Universidad Nacional Mayor de San Marcos, Lima, Per\'u}
\curraddr{}
\email{mzegarra@unmsm.edu.pe}
\thanks{}
\subjclass[2010]{Primary: 35K05, 35A01, 35K58, 35K65, 35B33 }
\begin{document}

\begin{abstract}

In this paper, we obtain the so-called Fujita exponent to the following parabolic system with time-weighted sources and degenerate coefficients $  u_{t}- \mbox{div} ( \omega(x)\nabla { u} )= t^{r} v^{p} $ and $ v_{t}-  \mbox{div} ( \omega(x)\nabla {v} )= t^{s} u^{p}$  in  $\mathbb{R}^{N} \times (0,T)$ with initial data belonging to $ \left[L^\infty(\mathbb{R}^N)\right]^2.$ Where $p,q > 0$  with $ pq > 1$; $r,s>-1 $; and either $\omega(x) = | x_1|^{a},$ or $\omega(x) = | x |^{b}$ with $a,b > 0$. 

\end{abstract}

\maketitle

\tableofcontents

\section{Introduction}\label{sec1}
Many problems that emerge in several branches of science are associated with elliptic and parabolic partial differential equations, which present a diffusion operator of the form $\mbox{div}(\omega(x) \nabla  \cdot)$. Where $\mbox{div}$ is the divergent, $\nabla$ is the gradient, and the spatial function $\omega: \mathbb{R}^N \rightarrow [0,\infty)$ is a weight representing the part of thermal diffusion, which can degenerate (see, e.g., \cite{Bass, Crank, Epstein, Epstein1, Hamilton, Heston, Kamin3, Kamin4, Kamin5, Murray1, Murray2, WZ1, WZ2, Zeldovich} and the references therein). Several authors have extensively    studied models related to those problems, and the literature is well known. For example, see the works of Kohn and Nirenberg \cite{KN}; Fabes, Kenig, and Serapioni \cite{FKS}; Gutierrez and Nelson \cite{MR919445}; Fujishima, Kawakami, and Sire \cite{Fujish}; Dong and Phan \cite{Dong}; Sire, Terracini, and Vita \cite{STV}.

We are interested in the following degenerate parabolic problem with time-weighted sources.
\begin{equation}\label{eq2.1}
 \left\{\begin{array}{rlll}
 u_{t}-  \mbox{div} ( \omega(x)\nabla u )  & = & t^r v^{p}& \mbox{ in } \mathbb{R}^{N} \times (0,T),\\
  v_{t}- \mbox{div} ( \omega(x)\nabla v )   &= & t^s u^{q}& \mbox{ in } \mathbb{R}^{N} \times (0,T),\\
u(0)=  u_{0}   &    & v(0)=  v_{0} &  \mbox{ in } \mathbb{R}^{N},
      \end{array}\right.
\end{equation}
where  $(u_0,v_0) \in L^{\infty}(\mathbb{R}^N)\times L^{\infty}(\mathbb{R}^N) \equiv [L^\infty(\mathbb{R}^N)]^2$; $u_0, v_0 \geq 0$; $p,q > 0$ with $ pq > 1$; $r,s > -1$; and the weighted function $\omega : \mathbb{R}^N \rightarrow [0,\infty)$  either
\begin{itemize}
\item[$(A)$] $\omega(x) = | x_1 |^{a}$ with $a \in [0, 1)$ if $N = 1,2$; and $a \in [0, 2/N)$ if $N \geq 3$, or

\item[$(B)$]  $\omega(x) = | x |^{b}$ with $b \in [0, 1)$.
\end{itemize}
 Note that the function $\omega$ with these characteristics belongs to the Muckenhoupt class of functions \footnote{The Muckenhoupt classes $A_p$, with $p>1$, is defined as the class of locally integrable nonnegative functions $w$ that satisfies $\left( \displaystyle\fint_Q w dx \right) \left( \displaystyle\fint_Q w^{- \frac{1}{(p-1)}} \right)< K$ for every cube $Q$ and some constant $K$.} $A_{1 + \frac{2}{N}}$, and the operator $div(\omega(x)\nabla  \cdot)$ is not self-adjoint (Fujishima et al. in \cite[p.6]{Fujish} comment on this particularity).

When $\omega(x)=| x_1 |^{a},$ it admits a line of singularities; thus, the problem (\ref{eq2.1}) is related to the fractional Laplacian through the Caffarelli-Silvestre extension, see \cite{Caffarelli}, \cite{Sande}, and \cite{Fujish}. The fractional Laplacian is associated with nonlocal diffusion and appears in the Levy diffusion process; for example, see \cite{Daoud, Klafter}.

In \cite{Fujish}, Fujishima et al. studied the following problem
\begin{equation}\label{In.Fu}
\left\{\begin{array}{rlll}
 W_{t}-  \mbox{div} ( \omega(x)\nabla W )  & = &  W^{p}& \mbox{ in } \mathbb{R}^{N} \times (0,T),\\
W(0)&=&  W_{0}    & \mbox{ in } \mathbb{R}^{N},
      \end{array}\right.
\end{equation}
and obtained the following Fujita exponent
$$  p^{\star}(\alpha) =1 + \frac{2 - \alpha}{N}, $$
where $\alpha = a$ in case $(A)$ and $\alpha = b$ in case $(B)$.

Several authors extensively studied problem \eqref{In.Fu} when $\omega(x)=1$. Hirose Fujita pioneered the approach of associating a critical exponent with the global existence of solutions  \cite{FU}. Specifically, he showed that if $1<p<p^*(0)$, then problem \eqref{In.Fu} does not admit any non-negative global solution. For $p>p^*(0)$, there exist both global and nonglobal solutions, depending on the size of the initial data; see also \cite{ Levine, Souplet} for more details. In the critical case $p=p^*(0)$, Hayakawa \cite{Hayakawa} (when $N=1,2 $), and later Aronson and Weinberger \cite{Aronson} (when $N \geq 3$) showed that problem \eqref{In.Fu} has no global solution.

When $\omega(x)=1$ and $r=s=0$, problem \eqref{eq2.1} was studied by Escobedo and Herrero \cite{MR1088342}; they showed that
$$(pq)^* = 1 + \frac{2}{N} (\max\{p, q \} + 1)$$
is the Fujita exponent for the problem \eqref{eq2.1}. That means that if $pq>(pq)^*$ then any non-trivial nonnegative solution blows up in finite time, and when $pq\leq (pq)^*,$ there exist both global and nonglobal solutions.

Later, when $\omega(x)=1$ and the time-weighted sources are $(1+t)^r$ and $(1+t)^s$ instead of $t^r$ and $t^s$, respectively, Cao et al. \cite{Cao} showed the existence of the following Fujita exponent
$$(pq)^* = 1 + \frac{2 \max\{ (r+1)q + s +1, (s+1)p + r + 1 \}}{N},$$
for the problem \eqref{eq2.1}. See also \cite{Castillo}, \cite{RM}, \cite{Ishige} and the references therein for other related results.

The main contribution of the current work is to guarantee the existence of the so-called Fujita exponent for the problem \eqref{eq2.1}; for this purpose, we adapted the approach used in \cite{Fujish} and \cite{MR1088342}. Nevertheless, difficulties inherent to the degenerate coupled system \eqref{eq2.1} appear, and the case $pq>1$ with $0<p<1$ (or $0<q<1$) merit more effort. Also, note that when $p=q>1$, $r=s=0,$ and $u=v $, \eqref{eq2.1} is reduced to problem \eqref{In.Fu} studied recently in \cite{Fujish}.

We want to mention that our approach can be applied to study the critical Fujita exponent of the following coupled systems:
\begin{equation}\label{Coupled1}
\left\{\begin{array}{rllll}
  \frac{d u_i}{dt}  -  \mbox{div} ( \omega(x)\nabla u_i  )  & = &  t^{r_i} u^{q_i}_{i+1}& i=1,...,m-1 & \mbox{ in } \mathbb{R}^{N} \times (0,T),\\
\frac{d u_m}{dt}  -  \mbox{div} ( \omega(x)\nabla u_m  )&=&  t^{r_m} u^{q_m}_{1}&    & \mbox{ in } \mathbb{R}^{N} \times (0,T),
      \end{array}\right.
\end{equation}
~~\\
and
~~\\
\begin{equation} \label{Coupled2}
\left\{\begin{array}{rllll}
 {u}_t  -  \mbox{div} ( \omega(x)\nabla u  )  & = &  t^{r_1} u^{p} + t^{r_2} v^{q}  & \mbox{ in } \mathbb{R}^{N} \times (0,T),\\
{v}_t  -  \mbox{div} ( \omega(x)\nabla v  )&=&  t^{r_3} u^{r}  + t^{r_4} v^{s}   & \mbox{ in } \mathbb{R}^{N} \times (0,T)  .
      \end{array}\right.
\end{equation}
When $\omega(x)=1,$ problem \eqref{Coupled1} was studied in \cite{Renclawowicz}, \cite{Umeda} and \cite{RM}, whereas that \eqref{Coupled2} was studied in \cite{Cui}, \cite{Snoussi}, and \cite{Castillo}.
~~\\

Solutions to the problem \eqref{eq2.1} are understood in the following sense.
\begin{defn}\label{DEF} Let $u$ and $v$, a.e. finite, measurable functions in $\mathbb{R}^N \times (0.T) $ for some $T>0$. Then we call that $(u,v)$ is a solution of (\ref{eq2.1}), if $(u,v) \in L^\infty ((0,T); L^{\infty}(\mathbb{R}^N)) \times L^\infty ((0,T); L^{\infty}(\mathbb{R}^N))$ and satisfies
\begin{equation} \label{Mild}
\begin{array}{rllllll}
 u(t) &=& S(t) u_{0} + \displaystyle \int_{0}^{t} S(t - \sigma) \sigma^{r} v^{p}(\sigma) d \sigma,
 \\ \noalign{\medskip}
 v(t) &=& S(t) v_{0} + \displaystyle \int_{0}^{t} S(t - \sigma) \sigma^{s} u^{q}(\sigma)d\sigma,
\end{array}
\end{equation}
for almost $x \in \mathbb{R}^N$ and $t>0$. If $T= \infty $, then we say that $(u,v)$ is a global-in-time solution. Where $ S(t)\phi(x) := [S(t)\phi](x) := \displaystyle \int_{\mathbb{R}^{N}} \Gamma(x, y, t) \phi(y) dy $, here $\Gamma(x, y, t)$ is the fundamental solution of \eqref{FP}.
\end{defn}

In what follows, we consider the following values
\begin{align}
 \gamma_1 &:= \frac{(r+1) + (s+1) p}{pq-1} \label{V1}\\
 \gamma_2 &: = \frac{(s+1) + (r+1) q}{pq-1} \label{V2}\\
 r_{1 \star} &: = \frac{N}{(2 - \alpha) \gamma_1} \label{V3} \\
 r_{2 \star} &: = \frac{N}{(2 - \alpha) \gamma_2}. \label{V4}
\end{align}

Our main result is the following.
\begin{thm} \label{Theorem3} Let $r,s>-1,$ \, $p,q >0,$ \, $p \cdot q > 1$, and the values \eqref{V1}-\eqref{V4}.
Suppose $ \alpha = a $ in the case that $\omega$ satisfies the condition $(\mbox{A})$, and $ \alpha=b$ in the case that $\omega$ satisfies the condition $(\mbox{B})$.
\begin{enumerate} 
  \item[(i)] If $ \gamma : = \max \left\{ \gamma_1, \gamma_2 \right\}  \geq \frac{N}{2 - \alpha},$ then problem $(\ref{eq2.1})$ has no nontrivial global-in-time solutions.
  \item[(ii)] If $ \gamma : = \max \{ \gamma_1, \gamma_2 \}   < \frac{N}{2 - \alpha},$ then there exists nontrivial global-in-time solutions to $(\ref{eq2.1})$. Also, there exists a constant $\delta > 0$ such that for any
      $$(u_{0}, v_{0}) \in [L^{\infty}(\mathbb{R}^N) \cap L^{r_{1 \star},\infty}(\mathbb{R}^N) ] \times [L^{\infty}(\mathbb{R}^N) \cap L^{r_{2 \star},\infty}(\mathbb{R}^N) ] $$
      with
      $$ \max \{ \|u_{0}\|_{r_{1 \star}, \infty},  \|v_{0}\|_{r_{2 \star}, \infty} \} < \delta,$$
      problem $(\ref{eq2.1})$ has a global-in-time solution $(u,v) $ satisfying:
\begin{equation*}
\sup_{t>0} (1+t)^{\frac{N}{2-\alpha} \left( \frac{1}{r_{1 \star}} - \frac{1}{\mu} \right)} \|u(t)\|_{\mu, \infty} < \infty
\end{equation*}
and
\begin{equation*}
 \sup_{t>0} (1+t)^{\frac{N}{2-\alpha} \left( \frac{1}{r_{2 \star}} - \frac{1}{\mu} \right)} \|v(t)\|_{\mu, \infty} < \infty
  \end{equation*}
for all $0< \mu $ such that $\max \{ r_{1 \star} , r_{2 \star} \} < \mu \leq \infty $.
\end{enumerate}
\end{thm}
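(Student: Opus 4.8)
The plan is to handle the two regimes by complementary methods that both rest on sharp bounds for the linear semigroup $S(t)$ generated by $\mathrm{div}(\omega\nabla\,\cdot\,)$. Since $\omega$ belongs to $A_{1+2/N}$, the fundamental solution $\Gamma(x,y,t)$ obeys two-sided Gaussian-type estimates in the spirit of \cite{FKS} and \cite{Fujish}; in particular one has the smoothing inequality
\[
\|S(t)\phi\|_{\mu,\infty}\le C\,t^{-\frac{N}{2-\alpha}\left(\frac{1}{\nu}-\frac{1}{\mu}\right)}\|\phi\|_{\nu,\infty},\qquad 0<\nu\le\mu\le\infty,
\]
together with the matching lower bound $\Gamma(x,y,t)\ge c\,t^{-\frac{N}{2-\alpha}}$ on the parabolic region $|x-y|^{2-\alpha}\lesssim t$. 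The quantities \eqref{V1}--\eqref{V4} are exactly the scaling-invariant exponents obtained by balancing this smoothing rate against the time weights $t^{r}$ and $t^{s}$; in particular $\gamma\ge\frac{N}{2-\alpha}$ holds precisely when $\min\{r_{1\star},r_{2\star}\}\le 1$.

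For the existence assertion (ii) I would solve the integral system \eqref{Mild} by a contraction argument. Working in the complete space of pairs $(u,v)$ with
\[
\|u\|_{X}:=\sup_{t>0}(1+t)^{\frac{N}{2-\alpha}\left(\frac{1}{r_{1\star}}-\frac{1}{\mu}\right)}\|u(t)\|_{\mu,\infty}<\infty,\qquad \|v\|_{Y}:=\sup_{t>0}(1+t)^{\frac{N}{2-\alpha}\left(\frac{1}{r_{2\star}}-\frac{1}{\mu}\right)}\|v(t)\|_{\mu,\infty}<\infty,
\]
and choosing the auxiliary exponent in $\|v^{p}\|$, $\|u^{q}\|$ so that the source terms lie in an admissible Lorentz space, I would insert the smoothing estimate into the two Duhamel integrals. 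The self-map and contraction properties then reduce to checking that the temporal integrals
\[
\int_{0}^{t}(t-\sigma)^{-\frac{N}{2-\alpha}(\cdots)}\,\sigma^{r}\,\sigma^{-\frac{Np}{2-\alpha}(\cdots)}\,d\sigma
\]
reproduce the factor $(1+t)^{-\frac{N}{2-\alpha}(1/r_{1\star}-1/\mu)}$, and symmetrically for the second equation. This is a Beta-type computation whose convergence at $\sigma=0$ and $\sigma=t$ is guaranteed precisely by $r,s>-1$ and by $r_{1\star},r_{2\star}>1$, i.e.\ by $\gamma<\frac{N}{2-\alpha}$. Since $S(t)u_{0}$ and $S(t)v_{0}$ already decay at the prescribed rate when $u_{0}\in L^{r_{1\star},\infty}$ and $v_{0}\in L^{r_{2\star},\infty}$, smallness of $\max\{\|u_{0}\|_{r_{1\star},\infty},\|v_{0}\|_{r_{2\star},\infty}\}$ keeps the map inside a small invariant ball, and the fixed point is the desired global solution with the stated norm bounds.

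For the nonexistence assertion (i) I would argue by contradiction, exploiting nonnegativity and the lower kernel bound in the manner of \cite{MR1088342}. If a nontrivial global solution existed, \eqref{Mild} would yield the coupled lower bounds $u(t)\ge\int_{0}^{t}S(t-\sigma)\sigma^{r}v^{p}(\sigma)\,d\sigma$ and $v(t)\ge\int_{0}^{t}S(t-\sigma)\sigma^{s}u^{q}(\sigma)\,d\sigma$, together with $u(t)\ge S(t)u_{0}$ and $v(t)\ge S(t)v_{0}$. Feeding each bound into the other and restricting to the cylinder where the Gaussian lower estimate is valid, I would construct inductively an increasing family of pointwise lower bounds $u(x,t)\ge c_{k}\,t^{\beta_{k}}$ (and likewise for $v$) at a fixed space point, whose exponents and constants are governed by the same arithmetic defining $\gamma_{1},\gamma_{2}$. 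When $\gamma\ge\frac{N}{2-\alpha}$ this recursion fails to contract: the constants $c_{k}$ diverge, contradicting $u,v\in L^{\infty}((0,\infty);L^{\infty}(\mathbb{R}^{N}))$.

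The hard part will be twofold. At the threshold $\gamma=\frac{N}{2-\alpha}$ in (i) the iteration only stabilizes, so one must extract a genuinely logarithmic divergence from the borderline Beta-integrals; this requires a careful tracking of the sub-optimal constants rather than the exponents. Second---as flagged in the introduction---the regime $pq>1$ with $p<1$ or $q<1$ obstructs (ii), because $v\mapsto v^{p}$ (or $u\mapsto u^{q}$) is then only H\"older continuous and the Lipschitz contraction breaks down. In that sublinear range I would instead close the argument by a monotone iteration between the explicit supersolution furnished by $S(t)u_{0},S(t)v_{0}$ and the zero subsolution, using the order-preserving property of $S(t)$ to pass to the limit and recover a solution of \eqref{Mild}.
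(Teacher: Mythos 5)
Your overall architecture (an iteration producing a necessary decay condition for nonexistence; a small-data fixed-point/monotone scheme in weighted Lorentz norms for existence) matches the paper's, and your bookkeeping of the exponents is correct. But there are two places where what you propose would not close.

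First, in part (i) the critical case $\gamma=\frac{N}{2-\alpha}$ is a genuine gap, not just ``the hard part.'' Your recursion, once you substitute the kernel lower bound $\Gamma\ge c\,t^{-N/(2-\alpha)}$ at a fixed space point, produces exponents $\beta_k=\beta^{k}\bigl(\gamma_1-\tfrac{N}{2-\alpha}\bigr)-\gamma_1$; at criticality these are constant while the constants $c_k$ behave like $\kappa_0^{\beta^k}$ with $\kappa_0<1$, so nothing diverges and no contradiction appears. The paper does not extract a logarithmic divergence from the iteration. Instead it first proves a clean necessary condition (Proposition \ref{p3.2}): $t^{\gamma_1}\|S(t)u_0\|_\infty\le C^\star$ uniformly, obtained by keeping the factor $[S(t)u_0]^{\beta^k}$ throughout the iteration, verifying $C_k\ge\kappa_0^{\beta^k}$, and taking $\beta^k$-th roots. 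At criticality it then shows via the Duhamel formula and $(K_4)$--$(K_5)$ that the localized mass $\int_{|x|\le t^{1/(2-\alpha)}}u\,dx\gtrsim\int_{t_0}^{t/2}d\sigma$ grows without bound, restarts the solution at a time where this mass exceeds an arbitrary $R$, and applies the necessary condition to the restarted solution to get $C^\star\ge cR$ for all $R$ --- a contradiction. You would need this (or an equivalent) second mechanism. Relatedly, when $0<q<1$ Jensen's inequality reverses ($[S(t)u_0]^q\ge S(t)u_0^q$, Lemma \ref{13.1}), so the iteration must be organized around $S(t)u_0^{q}$ rather than $(S(t)u_0)^q$, yielding the modified necessary condition $t^{q\gamma_1}\|S(t)u_0^q\|_\infty\le C^\star$; your sketch does not account for this case split.

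Second, in part (ii) your fallback for the sublinear range is not quite right: $S(t)u_0$ is not a supersolution of the nonlinear system, so ``monotone iteration between the supersolution $S(t)u_0$ and the zero subsolution'' has no upper barrier. What actually closes the argument (and is what the paper does, for all $p,q$, not only the sublinear case) is the increasing Picard iteration $u^{n+1}=S(t)u_0+\int_0^tS(t-\sigma)\sigma^r(v^n)^p\,d\sigma$ started from $u^0=S(t)u_0$, with the uniform bounds $\|u^n(t)\|_{r_{1\star},\infty}\le 2c_{**}\delta$ and $\|u^n(t)\|_\infty\le 2c_{**}\delta\,t^{-N/((2-\alpha)r_{1\star})}$ propagated by induction; boundedness of the monotone sequence is the substitute for contraction. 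Your Beta-integral computation is the right one for that induction, but note that the Lorentz smoothing estimate $(G_2)$ requires the source exponent $\eta_1>1$ (the constant blows up as $\eta_1\to1$), so the auxiliary exponent must be chosen with $1<\eta_1<r_{1\star}$ and $\eta_1p>r_{2\star}$, which is exactly where the hypotheses $r_{1\star},r_{2\star}>1$ and $pr_{1\star}>r_{2\star}$ enter; your claim of smoothing for all $0<\nu\le\mu$ is not available at the weak-$L^1$ end.
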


\begin{rem} Here are some comments on Theorem \ref{Theorem3}.
\begin{itemize}
\item[(i)] When $\alpha=0$, Theorem \ref{Theorem3} coincides with result in \cite[Theorem 1]{Cao}.
\item[(ii)] When $\alpha = 0$ and $r=s=0$, this Theorem agrees with the results that appear in \cite{MR1088342}.
\item[(iii)]  This result shows the existence of the critical value of Fujita and is given by
$$(pq)^*(\alpha) = 1 + \frac{(2-\alpha) \max\{(s+1)p + r+ 1, (r+1)q + s + 1 \}}{N}$$.
\end{itemize}
\end{rem}

The work is organized in the following way. In section 2, we present the necessary preliminaries. In section 3, we prove the non-global existence. Finally, in section 4, we demonstrate global existence.

\section{Preliminaries and toolbox}

In that follows, $C$ denotes a generic positive constant that may vary in different places, and its change is not essential to the analysis.

The positive part of $\phi(x) $ is defined by
$$\phi^{+}(x)= \max \{\phi(x), 0 \}. $$
The negative part of $\phi $ is defined analogously.

Here $x_1$ is the first coordinate of $x = (x_1,...,x_N) \in \mathbb{R}^N$, and $ \mid . \mid$ is the Euclidean norm of $\mathbb{R}^N$. The spaces $L^\infty(\mathbb{R}^N)$ and $L^\zeta(\mathbb{R}^N) (\zeta\geq 1)$ are defined as usual, and its norms are denoted by $\| \cdot \|_\infty$ and $\| \cdot \|_\zeta$, respectively. 

The Lorentz space $L^{\zeta,\infty}(\mathbb{R}^N), \zeta >1$, is defined as follows
\begin{equation*}
L^{\zeta, \infty} := \left \{ \psi : \mathbb{R}^N \rightarrow \mathbb{R}, \| \psi \|_{L^{\zeta , \infty}(\mathbb{R}^N)} = \sup_{\rho >0} \mid  \mu \left\{ x \in \mathbb{R}^N : \mid \psi(x) \mid > \rho \right \} \mid^{1/\zeta} < \infty \right \},
\end{equation*}
where $\mu$ is Lebesgue measure $\mathbb{R}^N$ (see \cite{grafakos}).

We will denote by $\Gamma := \Gamma(x,y,t)$ the fundamental solution of the following homogeneous problem
\begin{equation}\label{FP}
W_{t}- \mbox{div} ( \omega(x) \nabla  W )   = 0
\end{equation}
in $\mathbb{R}^{N}\times (0,T) $, with a pole at point $(y, 0)$, with $\omega$ fulfilling either $(A)$ or $(B)$. Since $\omega(x)$ belongs to the class $A_{1+ \frac{2}{N}}$ of Muckenhoupt functions (see, e.g. \cite{Mum}), we have that the fundamental solution $\Gamma = \Gamma (x, y, t)$ verifies the following properties (for more details, see \cite{MR919445} and \cite{Fujish}):

\begin{itemize}
\item[$(\mbox{K}_{1})$] $ \displaystyle \int_{\mathbb{R}^{N}} \Gamma( x , y , t) dx=\int_{\mathbb{R}^{N}} \Gamma(x , y , t) dy=1$ for $x, y \in \mathbb{R}^{N} $ and $t > 0$;
\item[$(\mbox{K}_{2})$] $\displaystyle \Gamma(x , y , t) = \int_{\mathbb{R}^{N}} \Gamma(x, \xi , t - s) \Gamma(\xi , y , s) d\xi  $ for $x, y \in \mathbb{R}^{N} $ and $t > s > 0$;
\item[$(\mbox{K}_{3})$] Suppose that $c_0 := \sup_{Q} \left( \frac{1}{\mid Q \mid} \int_{Q} \omega(x) dx \right) \left( \frac{1}{\mid Q \mid} \int_{Q} \omega(x)^{-1} dx \right) < \infty,$ where the supremum is taken over all cubes $Q \in \mathbb{R}^{N},$ and
    $$ h_{x}(r) = \left( \int_{B_{r}(x)} \omega(y)^{-\frac{N}{2}} dy \right)^{\frac{2}{N}}.$$
    Then there exist constants $ C_{0\star},c_{0\star}>0$, depending only on $N$ and $c_0$, such that
\begin{eqnarray*}
&c_{0\star}^{-1} \left( \frac{1}{[h_{x}^{-1}(t)]^{N}} + \frac{1}{[h_{y}^{-1}(t)]^{N}} \right) e^{- c_{0\star}( \frac{h_{x}(\mid x-y \mid)}{t})^{\frac{1}{1- \alpha}}} \\
&\leq   \Gamma(x, y , t) \\
&\leq C_{0\star}^{-1} \left( \frac{1}{[h_{x}^{-1}(t)]^{N}} + \frac{1}{[h_{y}^{-1}(t)]^{N}} \right) e^{- C_{0\star} ( \frac{h_{x}(\mid x-y \mid)}{t})^{\frac{1}{1- \alpha}}}
\end{eqnarray*}
  for $x, y \in \mathbb{R}^{N}$, $t > 0$, and $\alpha \in \{a, b\}$. Where $h_{x}^{-1}$ denotes the inverse function of {\small $h_x$ }.

\end{itemize}
~~\\

Also, by estimates $(2.11)$-$(2.12)$ in \cite{Fujish}, we have
\begin{itemize}
\item[$(\mbox{K}_{4})$] $$\displaystyle \int_{|x| \leq t^{\frac{1}{2-\alpha}} } \Gamma(x,y,t) dx \geq C,$$
for all $|y| \leq t^{\frac{1}{2-\alpha}},$ and some constant $C>0.$
\item[$(\mbox{K}_{5})$] $$\Gamma(x,y,t) \geq C t^{- \frac{N}{2-\alpha}},$$
for $|x|, |y| \leq t^{\frac{1}{2-\alpha}}$, $t>0$, and some constant $C>0.$
\end{itemize}
\begin{rem} Notice that a consequence of property $(\mbox{K}_{3})$ is the nonnegativity of the fundamental solution $\Gamma.$
\end{rem}

We will use the following results to show the global-in-time existence of the solutions to \eqref{eq2.1}.
\begin{prop}[\cite{Fujish}]
.\begin{itemize}
\item[$(G_1)$] Let $\phi \in L^{q_1}(\mathbb{R}^N)$ and $1 \leq q_1 \leq q_2 \leq \infty,$ then
  \begin{equation*}\label{G1}
  \|S(t)\phi \|_{q_2} \leq c_1 t^{- \frac{N}{2 - \alpha} \left(\frac{1}{q_1}- \frac{1}{q_2}\right)} \|\phi\|_{q_1}, \ t>0.
  \end{equation*}
Where the constant $c_1$ can be taken so that it depends only on $N$, $\alpha \in \{a, b\}$.

\item[$(G_2)$] Let $\phi \in L^{q_1, \infty}(\mathbb{R}^N)$ with $1 < q_1 \leq q_2 \leq \infty$, then
\begin{equation*}\label{G2}
\| S(t) \phi \|_{q_2, \infty} \leq c_2 t^{- \frac{N}{2 - \alpha} \left( \frac{1}{q_1} - \frac{1}{q_2} \right) } \| \phi \|_{q_1 , \infty}, t >0.
\end{equation*}
Where the constant $c_2$ can be taken so that it depends only on $q_1, N$, and $\alpha \in \{ a , b \}$. In particular, $c_2$ is bounded in $q_1 \in (1 + \varepsilon , \infty)$ for any fixed $\varepsilon >0$ and $c_2 \to \infty$ as $q_1 \to 1$.
\end{itemize}
\end{prop}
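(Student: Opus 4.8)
The plan is to deduce both $(G_1)$ and $(G_2)$ from three elementary ``corner'' estimates for the linear operator $S(t)$, and then to interpolate. The three estimates are the two contraction bounds
\[
\|S(t)\phi\|_1\le\|\phi\|_1,\qquad \|S(t)\phi\|_\infty\le\|\phi\|_\infty,
\]
together with the ultracontractive (smoothing) bound
\[
\|S(t)\phi\|_\infty\le C\,t^{-\frac{N}{2-\alpha}}\|\phi\|_1,\qquad t>0.
\]
Once these are available, $(G_1)$ follows from the Riesz--Thorin theorem and the Lorentz version $(G_2)$ from real (Marcinkiewicz) interpolation.

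First I would establish the two contraction estimates, which rest only on the nonnegativity of $\Gamma$ (the Remark after $(\mathrm{K}_5)$) and the normalization $(\mathrm{K}_1)$. Indeed $\|S(t)\phi\|_\infty\le\|\phi\|_\infty\,\sup_x\int_{\mathbb{R}^N}\Gamma(x,y,t)\,dy=\|\phi\|_\infty$, while Tonelli's theorem together with the companion normalization $\int_{\mathbb{R}^N}\Gamma(x,y,t)\,dx=1$ gives $\|S(t)\phi\|_1\le\int_{\mathbb{R}^N}|\phi(y)|\bigl(\int_{\mathbb{R}^N}\Gamma(x,y,t)\,dx\bigr)dy=\|\phi\|_1$.

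The heart of the matter, and the step I expect to be the main obstacle, is the ultracontractive bound, which is equivalent to the uniform kernel estimate $\sup_{x,y}\Gamma(x,y,t)\le C\,t^{-N/(2-\alpha)}$. For this I would invoke the upper Gaussian bound in $(\mathrm{K}_3)$: bounding the exponential factor by $1$ leaves
\[
\Gamma(x,y,t)\le C\Bigl([h_x^{-1}(t)]^{-N}+[h_y^{-1}(t)]^{-N}\Bigr),
\]
so it suffices to prove the uniform lower bound $h_x^{-1}(t)\ge c\,t^{1/(2-\alpha)}$ for all $x$. Here the homogeneous structure of $\omega$ is essential. Since $\omega(\lambda x)=\lambda^{\alpha}\omega(x)$, a change of variables in the definition of $h_x$ yields the exact scaling $h_{\lambda x}(\lambda r)=\lambda^{2-\alpha}h_x(r)$, so in particular $h_0(r)=h_0(1)\,r^{2-\alpha}$. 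Because $\omega^{-N/2}$ is symmetric-decreasing about its singular set (the origin in case $(\mathrm B)$, the hyperplane $\{x_1=0\}$ in case $(\mathrm A)$), a rearrangement argument shows that $x\mapsto h_x(r)$ is maximized at $x=0$; hence $h_x(r)\le h_0(r)=C\,r^{2-\alpha}$ for every $x$, and inverting gives $h_x^{-1}(t)\ge c\,t^{1/(2-\alpha)}$ uniformly in $x$. This produces the desired bound $\sup_{x,y}\Gamma(x,y,t)\le C\,t^{-N/(2-\alpha)}$. Equivalently, one may first derive the self-similar identity $\Gamma(x,y,t)=t^{-N/(2-\alpha)}\Gamma\bigl(t^{-1/(2-\alpha)}x,\,t^{-1/(2-\alpha)}y,\,1\bigr)$ from the homogeneity of $\omega$ together with $(\mathrm{K}_1)$, reducing the claim to the finiteness of $\sup_{x,y}\Gamma(\cdot,\cdot,1)$, which is immediate from $(\mathrm{K}_3)$.

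Finally I would interpolate. Plotting the operator norm of $S(t)$ against the parameters $(1/q_1,1/q_2)$, the three corner estimates above live at the points $(1,1)$, $(0,0)$ and $(1,0)$, whose convex hull is precisely the region $1\le q_1\le q_2\le\infty$ of $(G_1)$; since the target time-exponent $-\frac{N}{2-\alpha}(1/q_1-1/q_2)$ is affine in these parameters, two applications of the Riesz--Thorin theorem (equivalently, log-convexity of the operator norm on the exponent simplex) deliver $(G_1)$ with a constant depending only on $N$ and $\alpha$. For the Lorentz estimate $(G_2)$ I would instead use real interpolation: writing $L^{q_1,\infty}=(L^{p_0},L^{p_1})_{\theta,\infty}$ and $L^{q_2,\infty}=(L^{r_0},L^{r_1})_{\theta,\infty}$ with $p_0<q_1<p_1$ and $r_0\le r_1$ chosen with a common $\theta$ so that the corresponding strong $(G_1)$-bounds bracket the desired one, the $K$-method gives $\|S(t)\|_{L^{q_1,\infty}\to L^{q_2,\infty}}\le C\,M_0(t)^{1-\theta}M_1(t)^{\theta}$, and the product of the two affine time-exponents reproduces the rate $t^{-\frac{N}{2-\alpha}(1/q_1-1/q_2)}$. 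The restriction $q_1>1$ and the blow-up $c_2\to\infty$ as $q_1\to1$ simply record that one must take $p_0>1$ in this bracketing, and the Marcinkiewicz constant degenerates as $p_0\downarrow1$.
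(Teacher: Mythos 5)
Your proposal is correct in substance, but note that the paper itself contains no proof of this proposition: it is imported verbatim from the cited work of Fujishima, Kawakami and Sire, so the only meaningful comparison is with that reference, whose derivation likewise rests on the Gutiérrez--Nelson bounds $(\mbox{K}_3)$ for the kernel. Your reconstruction is a clean way to do it: the two contraction bounds follow exactly as you say from $(\mbox{K}_1)$ and the nonnegativity of $\Gamma$, and your identification of the ultracontractive bound $\sup_{x,y}\Gamma(x,y,t)\leq C t^{-N/(2-\alpha)}$ as the crux is right. The rearrangement step is valid in both cases: in case $(B)$ the map $x\mapsto h_x(r)$ is $(\omega^{-N/2}*\chi_{B_r})(x)^{2/N}$ with $\omega^{-N/2}=|\cdot|^{-bN/2}$ symmetric decreasing (integrable since $b<1<2$), hence maximized at $x=0$; in case $(A)$ one argues on each one-dimensional slice in the $x_1$-direction, and the hypotheses $a<1$ (for $N=1,2$) and $a<2/N$ (for $N\geq 3$) are precisely what make $|y_1|^{-aN/2}$ locally integrable, so $h_x(r)\leq h_0(r)=Cr^{2-\alpha}$ and the uniform bound $h_x^{-1}(t)\geq ct^{1/(2-\alpha)}$ follows. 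Two small caveats: your alternative route via the self-similarity of $\Gamma$ is not really shorter, since the ``immediate'' finiteness of $\sup_{x,y}\Gamma(x,y,1)$ from $(\mbox{K}_3)$ still requires the same uniform-in-$x$ lower bound on $h_x^{-1}(1)$ (and the scaling identity itself presupposes uniqueness of the fundamental solution, which neither $(\mbox{K}_1)$--$(\mbox{K}_3)$ nor the paper establishes); and in the real-interpolation step for $(G_2)$ you should flag the degenerate endpoint $q_2=\infty$, where one takes $r_0=r_1=\infty$ and interpolates only on the source couple. With those points made explicit, your argument correctly reproduces both estimates, including the dependence of $c_2$ on $q_1$ and its blow-up as $q_1\to 1$, which in your scheme is exactly the degeneration of the bracketing exponent $p_0\downarrow 1$.
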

Note that $L^{\infty,\infty}(\mathbb{R}^N)=L^{\infty}(\mathbb{R}^N).$
~~\\

Another tool used is the following interpolation result in Lorentz space.
\begin{prop}[\cite{grafakos}]
Let $1 \leq r_0 \leq r_2 \leq r_1 \leq \infty$ be such that $\frac{1}{r_2} = \frac{\theta}{r_0} + \frac{1 - \theta}{r_1}$, for $\theta \in [0,1]$. Then
\begin{equation}\label{Lorentz1}
\|f\|_{r_2,\infty} \leq \|f\|_{r_0,\infty}^\theta \|f\|_{r_1 , \infty}^{1 - \theta}, \mbox{ for } f \in L^{r_0 , \infty} \cap L^{r_1 , \infty}
\end{equation}
\end{prop}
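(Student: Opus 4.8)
The plan is to reduce everything to the distribution function and then perform a pointwise H\"older-type splitting in the level variable $\rho$. For a measurable $f$ I write $d_f(\rho) := \mu\{x \in \mathbb{R}^N : |f(x)| > \rho\}$ for its distribution function, so that, by the definition of the Lorentz quasinorm, $\|f\|_{r,\infty} = \sup_{\rho > 0} \rho\, d_f(\rho)^{1/r}$ for every $1 < r < \infty$, with the endpoint convention $\|f\|_{\infty,\infty} = \|f\|_\infty = \sup\{\rho > 0 : d_f(\rho) > 0\}$. First I would dispose of the trivial cases $\theta = 0$ and $\theta = 1$, in which \eqref{Lorentz1} reduces to an identity, so that henceforth $\theta \in (0,1)$; I may also assume $\|f\|_{r_0,\infty}$ and $\|f\|_{r_1,\infty}$ are both finite, since otherwise the right-hand side is $+\infty$ and there is nothing to prove.

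The heart of the argument is the algebraic observation that the exponent relation $\frac{1}{r_2} = \frac{\theta}{r_0} + \frac{1-\theta}{r_1}$ factors the quantity controlling $\|f\|_{r_2,\infty}$. Fix $\rho > 0$. Using $\rho = \rho^{\theta}\rho^{1-\theta}$ together with the exponent identity, I would write
\begin{equation*}
\rho\, d_f(\rho)^{1/r_2} = \left(\rho\, d_f(\rho)^{1/r_0}\right)^{\theta}\left(\rho\, d_f(\rho)^{1/r_1}\right)^{1-\theta}.
\end{equation*}
Each factor is bounded above by its supremum over all levels, namely $\rho\, d_f(\rho)^{1/r_0} \le \|f\|_{r_0,\infty}$ and $\rho\, d_f(\rho)^{1/r_1} \le \|f\|_{r_1,\infty}$, whence $\rho\, d_f(\rho)^{1/r_2} \le \|f\|_{r_0,\infty}^{\theta}\|f\|_{r_1,\infty}^{1-\theta}$. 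Since $\rho > 0$ is arbitrary, taking the supremum over $\rho$ on the left yields $\|f\|_{r_2,\infty} \le \|f\|_{r_0,\infty}^{\theta}\|f\|_{r_1,\infty}^{1-\theta}$, which is exactly \eqref{Lorentz1}.

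The only genuinely delicate point is the behaviour at the endpoint $r_1 = \infty$ (and symmetrically $r_0 = \infty$), where the factor $d_f(\rho)^{1/r_1}$ must be read as $1$ for every level with $d_f(\rho) > 0$, and the elementary bound $\rho \le \|f\|_\infty$ (valid because $d_f(\rho) = 0$ once $\rho > \|f\|_\infty$) must replace the Lorentz estimate on that factor; the same splitting then goes through with $\|f\|_{r_1,\infty}$ interpreted as $\|f\|_\infty$. I expect this endpoint bookkeeping, together with checking that the exponent identity is consistent with the convention $1/\infty = 0$, to be the main thing requiring care, whereas the core inequality is immediate from the factorization above. No integration, duality, or real interpolation machinery is needed, since the weak-type quasinorm is itself a supremum and the estimate is therefore purely pointwise in the level variable $\rho$.
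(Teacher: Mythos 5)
Your proof is correct, and there is nothing in the paper to compare it against: the proposition is stated with a citation to Grafakos and no proof is given, so your factorization argument $\rho\, d_f(\rho)^{1/r_2} = \bigl(\rho\, d_f(\rho)^{1/r_0}\bigr)^{\theta}\bigl(\rho\, d_f(\rho)^{1/r_1}\bigr)^{1-\theta}$, with the sup over $\rho$ taken at the end, is exactly the standard one-line proof from the cited source, and your handling of the endpoint $r_1=\infty$ (reading $d_f(\rho)^{1/r_1}$ as $1$ on levels where $d_f(\rho)>0$ and using $\rho\le\|f\|_\infty$ there) is the right bookkeeping. Two minor remarks: the case $r_0=\infty$ you mention is vacuous under the hypothesis $r_0\le r_2\le r_1$ except in the trivial all-infinite case, and note that the paper's displayed definition of $\|\cdot\|_{\zeta,\infty}$ omits the factor $\rho$ in front of $\mu\{|\psi|>\rho\}^{1/\zeta}$ (evidently a typo), whereas you correctly work with the standard quasinorm $\sup_{\rho>0}\rho\, d_f(\rho)^{1/\zeta}$, which is the one actually used elsewhere in the paper.
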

~~\\

For proof of the non-global existence of solutions to \eqref{eq2.1}, we will use the following results
\begin{lem}[\cite{Fujish}, Lemma 2.4] \label{Fujish2}
Assume that $\omega$ satisfies either $(A)$ or $(B)$. Let $\phi \in L^{\infty}(\mathbb{R}^{N}),$ $\phi \geq 0,$ and $\phi \neq 0$.  Then there exists a positive constant $C(\alpha,N)$, depending only on $\alpha $ and $N$, such that
\begin{equation*}
 S(t)\phi(x)\geq C(\alpha, N)^{-1} t^{-\frac{N}{2- \alpha}} \int_{| y | \leq t^{\frac{1}{2 - \alpha}}} \phi(y)dy,
\end{equation*}
for $ | x | \leq t^{\frac{1}{2- \alpha}}$ and $t>0,$ where $\alpha$ is defined  by $\alpha = a$ in the case (A) and $\alpha = b$ in the case (B).
\end{lem}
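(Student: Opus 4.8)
The plan is to obtain the lower bound by discarding the nonnegative contribution to $S(t)\phi(x)$ coming from the region $|y| > t^{\frac{1}{2-\alpha}}$, and then invoking the pointwise lower bound $(\mbox{K}_5)$ on what remains. Since $\phi \geq 0$ and the fundamental solution $\Gamma$ is nonnegative (a consequence of $(\mbox{K}_3)$, as noted in the remark following it), for every $t>0$ and every $x$ with $|x| \leq t^{\frac{1}{2-\alpha}}$ I would first write
\begin{equation*}
S(t)\phi(x) = \int_{\mathbb{R}^N} \Gamma(x,y,t)\phi(y)\,dy \geq \int_{|y| \leq t^{\frac{1}{2-\alpha}}} \Gamma(x,y,t)\phi(y)\,dy,
\end{equation*}
the inequality being justified merely because the integrand is nonnegative everywhere.

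Next, on the domain of the surviving integral both $|x| \leq t^{\frac{1}{2-\alpha}}$ and $|y| \leq t^{\frac{1}{2-\alpha}}$ hold, so property $(\mbox{K}_5)$ applies and furnishes a constant $C>0$, depending only on $N$ and $\alpha$, with $\Gamma(x,y,t) \geq C\, t^{-\frac{N}{2-\alpha}}$ throughout that region. Inserting this uniform lower bound and pulling the constant out of the integral yields
\begin{equation*}
S(t)\phi(x) \geq C\, t^{-\frac{N}{2-\alpha}} \int_{|y| \leq t^{\frac{1}{2-\alpha}}} \phi(y)\,dy,
\end{equation*}
which is precisely the asserted estimate upon setting $C(\alpha,N)^{-1} := C$. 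The hypotheses $\phi \in L^{\infty}(\mathbb{R}^N)$, $\phi \geq 0$, and $\phi \neq 0$ guarantee that the right-hand side is finite and, for $t$ large, strictly positive, which is what makes the bound useful in the blow-up argument later; they play no role in the inequality itself.

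The argument is therefore essentially immediate once $(\mbox{K}_5)$ is in hand, and the genuine content lies entirely inside that pointwise bound, equivalently inside the two-sided Gaussian estimate $(\mbox{K}_3)$. Accordingly, the only place I expect real difficulty is in establishing $(\mbox{K}_5)$ itself, were one to derive it rather than quote it: one must analyze the auxiliary function $h_x(r) = \left( \int_{B_r(x)} \omega(y)^{-\frac{N}{2}}\,dy \right)^{\frac{2}{N}}$ and verify, exploiting the homogeneity of $\omega$ in cases $(A)$ and $(B)$, that $h_x^{-1}(t)$ scales like $t^{\frac{1}{2-\alpha}}$, so that the prefactor in $(\mbox{K}_3)$ is comparable to $t^{-\frac{N}{2-\alpha}}$, while simultaneously checking that $|x|,|y| \leq t^{\frac{1}{2-\alpha}}$ forces $h_x(|x-y|) \lesssim t$, whence the exponential factor stays bounded below by a positive constant. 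Granting the scaling properties recorded in $(\mbox{K}_3)$–$(\mbox{K}_5)$, no such difficulty remains and the lemma follows by the two displayed inequalities above.
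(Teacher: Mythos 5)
Your argument is correct: restricting the integral to $|y|\leq t^{\frac{1}{2-\alpha}}$ (legitimate since $\Gamma\geq 0$ and $\phi\geq 0$) and applying the pointwise lower bound $(\mbox{K}_5)$ immediately gives the stated estimate with a constant depending only on $N$ and $\alpha$. The paper itself offers no proof --- the lemma is imported verbatim from \cite{Fujish} --- so there is nothing to compare against beyond noting that your two-line derivation from $(\mbox{K}_5)$ is exactly the intended route, with all the real work hidden in the Gaussian-type bounds $(\mbox{K}_3)$--$(\mbox{K}_5)$, as you correctly observe.
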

~~\\

\begin{lem}\label{13.1} Assume that $\omega$ satisfies either $(A)$ or $(B)$. If $u_0 \in L^{\infty}(\mathbb{R}^N)$ is a nonnegative function and $p\geq1$, then $ [S(t)u_0]^p\leq S(t)u_{0}^{p}$. If $ 0<p<1$, then $ [S(t)u_0]^p\geq S(t)u_0^p$.
\end{lem}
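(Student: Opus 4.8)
The plan is to recognize that for each fixed $x \in \mathbb{R}^N$ and $t>0$, the kernel $y \mapsto \Gamma(x,y,t)$ defines a Borel \emph{probability} measure on $\mathbb{R}^N$, and then apply Jensen's inequality. Indeed, by the Remark following property $(\mbox{K}_3)$ the fundamental solution is nonnegative, $\Gamma(x,y,t)\geq 0$, and by $(\mbox{K}_1)$ we have $\int_{\mathbb{R}^N}\Gamma(x,y,t)\,dy=1$. Hence, fixing $x$ and $t$, the measure $d\mu_{x,t}(y):=\Gamma(x,y,t)\,dy$ is a probability measure, and by Definition \ref{DEF} the quantity $S(t)u_0(x)=\int_{\mathbb{R}^N}u_0(y)\,d\mu_{x,t}(y)$ is precisely the $\mu_{x,t}$-average of the nonnegative bounded function $u_0$.

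First I would check the integrability needed for everything to be meaningful: since $u_0\in L^{\infty}(\mathbb{R}^N)$ with $u_0\geq 0$, both $u_0$ and $u_0^{p}$ lie in $L^{\infty}(\mathbb{R}^N)$, so $S(t)u_0(x)$ and $S(t)u_0^{p}(x)$ are finite for every $x\in\mathbb{R}^N$ and $t>0$, and the averages $\int u_0\,d\mu_{x,t}$ and $\int u_0^{p}\,d\mu_{x,t}$ are well defined.

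Next, for the case $p\geq 1$ I would invoke the convexity of the map $\Phi(\tau)=\tau^{p}$ on $[0,\infty)$. Jensen's inequality applied to the probability measure $\mu_{x,t}$ gives
$$\Phi\!\left(\int_{\mathbb{R}^N}u_0\,d\mu_{x,t}\right)\leq \int_{\mathbb{R}^N}\Phi(u_0)\,d\mu_{x,t},$$
which is exactly $[S(t)u_0(x)]^{p}\leq S(t)u_0^{p}(x)$. For the case $0<p<1$, the map $\Phi(\tau)=\tau^{p}$ is instead concave on $[0,\infty)$, so the inequality in Jensen reverses and yields $[S(t)u_0(x)]^{p}\geq S(t)u_0^{p}(x)$. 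Since $x\in\mathbb{R}^N$ and $t>0$ are arbitrary, both pointwise inequalities follow.

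There is no serious obstacle here; the only point requiring care is the justification that $\mu_{x,t}$ is genuinely a probability measure, i.e.\ nonnegativity together with unit total mass, which is furnished directly by the nonnegativity of $\Gamma$ and by $(\mbox{K}_1)$. One could alternatively avoid citing Jensen and argue via the elementary scalar inequalities for $\tau^{p}$ combined with an approximation of $u_0$ by simple functions, but the measure-theoretic Jensen route is the cleanest and handles an arbitrary $u_0\in L^{\infty}(\mathbb{R}^N)$ in one stroke.
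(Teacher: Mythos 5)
Your proof is correct and follows essentially the same route as the paper: both rely on $\Gamma\geq 0$ together with $(\mbox{K}_1)$ to view $\Gamma(x,y,t)\,dy$ as a probability measure and then apply Jensen's inequality. The only cosmetic difference is in the case $0<p<1$, where you invoke the concave form of Jensen directly while the paper reduces it to the convex case by replacing $u_0$ with $u_0^{p}$ and $p$ with $1/p$; the two arguments are interchangeable.
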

\begin{proof}
Since the fundamental solution $\Gamma \geq 0$ and
$$  \displaystyle \int_{\mathbb{R}^{N}} \Gamma(x , y , t) dy=1,$$
we can use Jensen's inequality and so get
$$  S(t)u_{0}^{p}(x) = \int_{\mathbb{R}^{N}} \Gamma(x,y,t) u_{0}^{p}(y) dy \geq \left(\int_{\mathbb{R}^{N}} \Gamma(x,y,t) u_{0}(y) dy \right)^{p}= [S(t)u_0(x)]^p. $$
Assume now that $0<p<1$. Since $u_0^p \in L^{\infty}(\mathbb{R}^{N})$ is a nonnegative function, the conclusion follows as the anterior case replacing  $p$ by $1/p$.
\end{proof}
~~\\

\section{Nonglobal Existence}

We need the following Proposition to prove the first part of our main result.

\begin{prop}\label{p3.2} Assume that $\omega$ satisfies either $(A)$ or $(B)$, and $u_0, v_0 \in L^\infty(\mathbb{R}^N)$ with $u_0, v_0 \geq 0$. Suppose that $(u,v) \in \left[L^{\infty}((0,T), L^{\infty}(\mathbb{R}^{N})) \right]^{2}$ is a solution of $(\ref{eq2.1})$ with $0 < T \leq \infty,$ and $p,q > 0$ with $pq > 1$.
Then, there exists a constant $ C^{\star} >0$ (which depends only on $p,q,r,$ and $s$),  such that
\begin{equation} \label{Estimate22}
\begin{array}{rlllll}
t^{\frac{(r+1) + (s+1) p}{pq-1}} \|S(t)u_{0}\|_{\infty} & \leq& C^{\star} &\mbox{ if } q > 1
\\ \noalign{\medskip}
t^{ q\frac{(r+1) + (s+1) p}{pq-1} } \|S(t)u_{0}^{q}\|_{\infty} &\leq& C^{\star} & \mbox{ if } 0 < q < 1,
\\ \noalign{\medskip}
t^{\frac{(s+1) + (r+1) q}{pq-1}} \|S(t)v_{0}\|_{\infty} & \leq& C^{\star} &\mbox{ if } p > 1
\\ \noalign{\medskip}
 t^{ p\frac{(s+1) + (r+1) q}{pq-1} } \|S(t)v_{0}^{p}\|_{\infty} & \leq& C^{\star}  &\mbox{ if } 0 < p < 1.
\end{array}
\end{equation}
for all $t\in[0, T).$
\end{prop}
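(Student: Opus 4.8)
The plan is to turn the existence of the bounded solution into a \emph{necessary} decay condition on the linear evolutions $S(t)u_0$ and $S(t)v_0$, extracted from the Duhamel system by iteration. First I would record the structural consequences of nonnegativity. Since $\Gamma\ge0$ and $\int_{\mathbb R^N}\Gamma(x,y,t)\,dy=1$ by $(K_1)$, the operator $S(t)$ is order preserving and a contraction on $L^\infty$; hence $t\mapsto\|S(t)u_0\|_\infty$ and $t\mapsto\|S(t)v_0\|_\infty$ are nonincreasing. From \eqref{Mild} together with $u_0,v_0\ge0$ and $p,q>0$ one gets the pointwise lower bounds $u(t)\ge S(t)u_0\ge0$, $v(t)\ge S(t)v_0\ge0$, and the Duhamel inequalities $u(t)\ge\int_0^t S(t-\tau)\tau^r v(\tau)^p\,d\tau$ and $v(t)\ge\int_0^t S(t-\sigma)\sigma^s u(\sigma)^q\,d\sigma$. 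Because $pq>1$ the exponents cannot both be smaller than $1$, which is exactly the dichotomy organizing the four claimed estimates.

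The sublinear cases are handled by a clean factorization. If $0<q<1$ (so necessarily $p>1$), then $u(\sigma)^q\ge(S(\sigma)u_0)^q\ge S(\sigma)u_0^q$ by Lemma~\ref{13.1}; inserting this into the $v$-inequality and using the semigroup identity $(K_2)$ to pull $S(t)$ out of the time integral yields
\[
v(t)\ \ge\ \int_0^t \sigma^s\,S(t-\sigma)S(\sigma)u_0^q\,d\sigma\ =\ \Big(\int_0^t\sigma^s\,d\sigma\Big)S(t)u_0^q\ =\ \frac{t^{s+1}}{s+1}\,S(t)u_0^q,
\]
hence $\|v(t)\|_\infty\ge\frac{t^{s+1}}{s+1}\|S(t)u_0^q\|_\infty$; the case $0<p<1$ is symmetric. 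This is why the sublinear estimates are phrased with $u_0^q$ and $v_0^p$, and the shift between the two sides is recorded by the algebraic identities $q\gamma_1=\gamma_2+(s+1)$ and $p\gamma_2=\gamma_1+(r+1)$, both checked directly from \eqref{V1}--\eqref{V2}.

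The heart of the matter is the superlinear closure. Substituting the lower bound for $v$ into the inequality for $u$ and using $u(\sigma)\ge S(\sigma)u_0$ produces a self-referential lower bound for $u(t)$ as a space--time double integral of a $(pq)$-power of $S(\cdot)u_0$. I would restrict both time integrations to the slab $\sigma,\tau\sim t$ and localize spatially to the parabolic ball $|x|\lesssim t^{1/(2-\alpha)}$, where $(K_4)$--$(K_5)$ bound the kernels below by a multiple of $t^{-N/(2-\alpha)}$; the scaling $t\sim|x|^{2-\alpha}$ then forces precisely the exponent $\gamma_1$ to appear. Comparing this superlinear lower bound with the finite number $\|u(t)\|_\infty$ (finite because the solution exists on $[0,T)$) gives $\sup_{0<t<T}t^{\gamma_1}\|S(t)u_0\|_\infty\le C^\star$. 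Since the feedback has exponent $pq>1$, the finite seed coming from the solution bound enters only through a power smaller than $1$ and is absorbed, so $C^\star$ depends on $p,q,r,s$ alone; monotonicity of $\|S(t)u_0\|_\infty$ disposes of the range $t\nearrow T$. The three remaining inequalities follow by the same scheme combined with the factorization above and the two exponent relations.

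I expect the main obstacle to be exactly this superlinear closure at the level of the supremum norm for the \emph{degenerate, non-self-adjoint} operator: the lower kernel bounds $(K_4)$--$(K_5)$ control mass-type averages over parabolic balls, which must be reconciled with the supremum (a moving concentration point), and one must confirm that the localized space--time integral returns \emph{exactly} $\gamma_1$ (resp. $\gamma_2$) while keeping the constant independent of $T$, of $\|u\|_{L^\infty}$, and of the data. Carefully separating the four cases through Lemma~\ref{13.1}, and treating the borderline configuration $pq>1$ with one exponent below $1$, is where the bulk of the technical care will go.
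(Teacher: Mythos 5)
Your setup (the lower bounds $u(t)\ge S(t)u_0$, the Duhamel inequalities, and the Jensen-type factorization $v(t)\ge \frac{t^{s+1}}{s+1}S(t)u_0^{q}$ via Lemma \ref{13.1} for the sublinear exponents) matches the paper, but the step you call the ``superlinear closure'' has a genuine gap. A single substitution of the two Duhamel inequalities into each other, together with $u(\sigma)\ge S(\sigma)u_0$ and the semigroup property, yields $u(t)\ge C\,t^{(pq-1)\gamma_1}[S(t)u_0]^{pq}$, hence $C\,t^{(pq-1)\gamma_1}\|S(t)u_0\|_\infty^{pq}\le \|u(t)\|_\infty$. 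This is \emph{not} self-referential in the quantity you are estimating: the left side involves $A(t):=\|S(t)u_0\|_\infty$ while the right side involves $M(t):=\|u(t)\|_\infty$, and the only available comparison, $M(t)\ge A(t)$, points the wrong way. Rearranging gives $t^{\gamma_1}A(t)\le C\,t^{\gamma_1/(pq)}M(t)^{1/(pq)}$, which is neither uniform in $t$ nor independent of the solution; there is nothing to ``absorb,'' because absorption would require the same quantity on both sides. Your appeal to $(K_4)$--$(K_5)$ and parabolic balls does not repair this: those kernel lower bounds are what the paper uses later in the critical case $\gamma_1=N/(2-\alpha)$ of Theorem \ref{Theorem3}(i), not in the proof of Proposition \ref{p3.2}, which needs only $(K_1)$--$(K_3)$ and Jensen's inequality.

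The missing idea is to iterate the substitution \emph{infinitely many times}: one proves by induction the family of bounds $u(t)\ge C_k\,t^{(\beta^k-1)\gamma_1}[S(t)u_0]^{\beta^k}$ with $\beta=pq$ and an explicit recursion for $C_k$ (this is \eqref{T.uno}--\eqref{T.tre} in the paper), then takes the $\beta^{-k}$-th root. Since $u(t,x)$ is a.e.\ finite, $u(t,x)^{\beta^{-k}}\to 1$ as $k\to\infty$, so the solution's size disappears entirely and one is left with $1\ge \kappa_0\,t^{\gamma_1}S(t)u_0(x)$, i.e.\ the first line of \eqref{Estimate22} with $C^\star=\kappa_0^{-1}$ depending only on $p,q,r,s$. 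The technical core you did not anticipate is the control of the constants: one must show $C_k\ge \kappa_0^{\beta^k}$ for some $\kappa_0>0$, which the paper does by proving that $\theta_k=-\beta^{-k}\ln C_k$ has summable increments $\theta_k-\theta_{k-1}\lesssim \beta^{-k}(k+1)$. For the sublinear cases the same infinite iteration must be run starting from $v(t)\ge \frac{t^{s+1}}{s+1}S(t)u_0^{q}$, giving the bound on $t^{q\gamma_1}\|S(t)u_0^{q}\|_\infty$ rather than on $t^{\gamma_1}\|S(t)u_0\|_\infty$; your one-step version inherits the same gap there.
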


\begin{proof}
We will first prove the first inequality in (\ref{Estimate22}). To do this, we will show the following estimate
\begin{equation}\label{T.uno}
u(t) \geq C_k t^{(\beta^k-1) \gamma_1} [S(t) u_{0}]^{\beta^k}
\end{equation}
for all $t \in (0,T)$ and $k \in \mathbb{N}.$ Where $C_0=1$, $\beta = pq$, $ \gamma_1 = \frac{(r+1) + (s+1) p}{pq-1} $ and
\begin{equation}\label{T.tre}
\begin{array}{ll}
 C_k = C_{k-1}^{\beta}[(\beta^{k-1} -1) q \gamma_1+ s+1]^{-p} [(\beta^{k-1} -1) \gamma_1 \beta + p (s+1) +(r+1)]^{-1},
\end{array}
\end{equation}
for $k \in \mathbb{N}$. Indeed, we argue by induction. From (\ref{Mild}) and property $(K_3)$, we get that $u(t) \geq S(t)u_{0}$, for $t>0 $, thus (\ref{T.uno}) holds for $k=0$. Now, assume that (\ref{T.uno}) holds for $k \geq 1$, then from (\ref{Mild}), $(K_1),$ $(K_2),$ $(K_3),$ and Lemma \ref{13.1}, we have
\begin{equation} \label{BLOWWW}
\begin{array}{ll}
v(t) &\geq  \displaystyle \int_0^t S(t-\sigma) \sigma^{s} [u(\sigma)]^{q}d\sigma
\\ \noalign{\medskip}
& \geq  \displaystyle \int_0^t S(t-\sigma) \sigma^{s} \left[C_k \sigma^{(\beta^k-1) \gamma_1} [S(\sigma) u_{0}]^{\beta^k} \right]^{q}d\sigma
\\ \noalign{\medskip}
&\geq C_k^{q}  \displaystyle \int_0^t  \sigma^{(\beta^k-1) \gamma_1 q +s}S(t-\sigma)[S(\sigma)u_{0}]^{q \beta^k}d\sigma
\\ \noalign{\medskip}
&\geq C_k^{q} [S(t)u_{0}]^{q \beta_k}  \displaystyle \int_0^t \sigma^{(\beta^k-1) \gamma_1 q + s}d\sigma
\\ \noalign{\medskip}
&= C_{k,1} t^{(\beta^k-1) \gamma_1 q+ s+1} [S(t)u_{0}]^{q \beta^k}
\end{array}
\end{equation}

for $t>0,$ where $ C_{k,1}= C_k^{q}/ ((\beta^k-1) \gamma_1 q+s+1).$ Similarly, from (\ref{BLOWWW}), we obtain
$$
\begin{array}{ll}
u(t) &\geq  \displaystyle \int_0^t S(t-\sigma) \sigma^{r} [v(\sigma)]^{p}d\sigma
\\ \noalign{\medskip}
& \geq  \displaystyle \int_0^t S(t-\sigma) \sigma^{s} \left[ C_{k,1} \sigma^{(\beta^k-1) \gamma_1 q+ s+1} [S(\sigma)u_{0}]^{q \beta^k} \right]^{p}d\sigma
\\ \noalign{\medskip}
&\geq  C_{k,1}^{p}[S(t)u_{0}]^{\beta^{k+1}}  \displaystyle \int_0^t \sigma^{(\beta^k-1) \gamma_1 \beta +(s+1) p + r}d\sigma
\\ \noalign{\medskip}
&=C_{k,2} t^{ (\beta^k-1) \gamma_1 \beta +(s+1) p + (r+1) } [S(t)u_{0}]^{\beta^{k+1}}
\end{array}
$$
for $ t>0,$  where $C_{k,2}= C_{k,1}^{p}/ [(\beta^k-1) \gamma_1 \beta +(s+1) p + (r+1)].$ Since that
$$ (\beta^k-1) \gamma_1 \beta +(s+1) p + (r+1) = (\beta^{k+1}-1)\gamma_1, $$
we have
\begin{eqnarray*}
u(t) \geq C_{k,2} t^{ (\beta^{k+1}-1) \gamma_1} [S(t)u_{0}]^{\beta^{k+1}},
\end{eqnarray*}
for $ t > 0$. Also, denoting $C_{k+1}= C_{k,2}$ and inserting the value of $C_{k,1}$, we get (\ref{T.tre}).

Now we to show that there exists $\kappa_0 >0 $ such that $C_k \geq \kappa_{0}^{\beta^k}$ for all $k\geq2$. Let $\theta_k = - \beta^{-k} \ln(C_k)$, it is sufficient to prove that the sequence $ \{\theta_k\}_{k \in \mathbb{N}} $ is limited from above. From \eqref{T.tre}, we have
\begin{align*}
\theta_i - \theta_{i-1} &= \beta^{-i} \ln\left( \frac{C_{i-1}^{\beta}}{C_i} \right) \\
 &= \beta^{-i} \ln \left( [(\beta^{i-1} -1) q \gamma_1+ s+1]^{p} [(\beta^{i-1} -1) \gamma_1 \beta + p (s+1) +(r+1)] \right) \\
& \leq \beta^{-i} \ln \left( \left\{\begin{array}{rlll}
 [\gamma_1 (\beta^i - 1)]^{p+1} &\mbox{ if } p>1,\\
q[\gamma_1 (\beta^i - 1)]^2 &\mbox{ if } 0 < p \leq 1
      \end{array}\right. \right)\\
& \leq C \beta^{-i} (i+1),
\end{align*}
this implies $\theta_k - \theta_{1} = \sum_{i=1}^{k} (\theta_i - \theta_{i-1}) \leq C \sum_{i=1}^{k} \beta^{-i} (i+1) < \infty.$ Thus, from this and \eqref{T.uno} we have
$$ u(t)^{1/\beta^{k}} \geq  \kappa_0 \,  t^{\gamma_1(1-1/\beta^{k})} S(t)u_{0},$$
for all $t \in (0, T).$ Since $\beta>1$, letting $k\rightarrow\infty$, we get the first inequality in (\ref{Estimate22}). \\

We argue similarly to the above case to prove the second inequality of \eqref{Estimate22}. We apply Lemma \ref{13.1} iteratively, starting with $v(t)\geq t^{s+1}S(t)u_0^q $, until getting the following inequality
\begin{equation}\label{SecondInequality}
u(t) \geq D_k t^{(\beta^k-1) \gamma_1 }[S(t)u_0^q]^{p\beta^{k-1}}
\end{equation}
for all $t \in (0,T)$ and $k \in \mathbb{N}.$ Where $\beta = pq$, $ \gamma_1 = \frac{(r+1) + (s+1) p}{pq-1},$ and $D_k \geq \eta_{1}^{\beta_k} \, (\eta_1>0)$. So, from (\ref{SecondInequality}), we get
$$ u(t)^{q/\beta^{k}} \geq  \eta_1  t^{q\gamma_1(1-1/\beta^{k})} S(t)u_{0}^q,$$ for all $t \in (0,T)$ and some positive constant $\eta_1$. Then, letting $k$ tend to infinity, we obtain the second inequality of (\ref{Estimate22}).

By symmetry, the other inequalities follow immediately.
\end{proof}
~~\\

The following Corollary is a direct consequence of the above Proposition
\begin{cor} \label{COR}  Assume that $\omega$ satisfies either $(A)$ or $(B)$, and $u_0, v_0 \in L^\infty(\mathbb{R}^N)$ with $u_0, v_0 \geq 0$. If $(u,v) \in \left[L^{\infty}((0,\infty), L^{\infty}(\mathbb{R}^{N})) \right]^{2}$ is a global-in-time solution of $(\ref{eq2.1})$.
Then, there exists a constant $ C^{\star\star} >0$ (which depends only on $p,q,r,$ and $s$), such that
\begin{equation*}
\begin{array}{rlllll}
t^{\frac{(r+1) + (s+1) p}{pq-1}} \|S(t)u(t)\|_{\infty}& \leq& C^{\star\star} &\mbox{ if } q > 1
\\ \noalign{\medskip}
t^{ q\frac{(r+1) + (s+1) p}{pq-1} } \|S(t)u(t)^{q}\|_{\infty}& \leq& C^{\star\star}  &\mbox{ if } 0 < q < 1,\\ \noalign{\medskip}
t^{\frac{(s+1) + (r+1) q}{pq-1}} \|S(t)v(t)\|_{\infty}& \leq& C^{\star\star} &\mbox{ if } p > 1
\\ \noalign{\medskip}
t^{ p\frac{(s+1) + (r+1) q}{pq-1} } \|S(t)v(t)^{p}\|_{\infty}& \leq &C^{\star\star}  &\mbox{ if } 0 < p < 1.
\end{array}
\end{equation*}
for all $t\in(0,\infty)$.
\end{cor}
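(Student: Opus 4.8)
The plan is to reduce each of the four estimates to Proposition~\ref{p3.2} by translating in time and then freezing the time weights. Fix $t_0>0$ and set $\bar u(\sigma):=u(t_0+\sigma)$, $\bar v(\sigma):=v(t_0+\sigma)$ for $\sigma\ge 0$. Splitting the Duhamel integral in \eqref{Mild} at $t_0$ and using the reproduction identity $(\mbox{K}_2)$ (so that $S(t_0+\sigma-\tau)=S(\sigma)S(t_0-\tau)$ for $\tau<t_0$), one checks that $(\bar u,\bar v)$ satisfies
\[
\bar u(\sigma)=S(\sigma)u(t_0)+\int_0^\sigma S(\sigma-\tau)(t_0+\tau)^r\,\bar v^p(\tau)\,d\tau,
\]
together with the companion identity for $\bar v$, with weight $(t_0+\tau)^s$ and datum $v(t_0)$. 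Since $(u,v)$ is a global $L^\infty$ solution, $(\bar u,\bar v)$ is bounded on $[0,\infty)$ and $\bar u(0)=u(t_0)$, $\bar v(0)=v(t_0)$.

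Next I would freeze the weights. For $\tau\in[0,2t_0]$ the ratio $(t_0+\tau)/t_0$ lies in $[1,3]$, so $(t_0+\tau)^r\ge c_r t_0^r$ and $(t_0+\tau)^s\ge c_s t_0^s$ with $c_r,c_s>0$ depending only on $r,s$. Inserting these lower bounds shows that on $[0,2t_0]$ the pair $(\bar u,\bar v)$ satisfies the integral \emph{inequalities}
\[
\bar u(\sigma)\ge S(\sigma)u(t_0)+c_r t_0^r\!\int_0^\sigma\! S(\sigma-\tau)\bar v^p\,d\tau,\qquad
\bar v(\sigma)\ge S(\sigma)v(t_0)+c_s t_0^s\!\int_0^\sigma\! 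S(\sigma-\tau)\bar u^q\,d\tau,
\]
that is, the inequalities of a system with \emph{constant} coefficients $c_r t_0^r$, $c_s t_0^s$ and vanishing time exponents. I then absorb the constants by scaling: writing $\bar u=\lambda U$, $\bar v=\mu V$ and choosing $\lambda=(c_rc_s^{\,p})^{-1/(pq-1)}t_0^{-(r+ps)/(pq-1)}$ together with the corresponding $\mu$ so that $c_rt_0^r\mu^p/\lambda=c_st_0^s\lambda^q/\mu=1$, the pair $(U,V)$ satisfies exactly the integral inequalities of \eqref{eq2.1} with $r=s=0$ and datum $(U(0),V(0))=(u(t_0)/\lambda,\,v(t_0)/\lambda)$, and is still bounded.

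The proof of Proposition~\ref{p3.2} uses only such integral inequalities, Lemma~\ref{13.1}, and the $L^\infty$-boundedness of the solution; applying it verbatim in the case $r=s=0$, where $\gamma_1$ becomes $\tilde\gamma_1:=(1+p)/(pq-1)$, gives for $q>1$ that $\sigma^{\tilde\gamma_1}\|S(\sigma)U(0)\|_\infty\le C$ for $0\le\sigma<2t_0$, with $C$ depending only on $p,q$. Undoing the scaling via $S(t_0)u(t_0)=\lambda\,S(t_0)U(0)$ and evaluating at $\sigma=t_0$ yields
\[
\|S(t_0)u(t_0)\|_\infty=\lambda\,\|S(t_0)U(0)\|_\infty\le C\,\lambda\,t_0^{-\tilde\gamma_1}=C'\,t_0^{-[\tilde\gamma_1+(r+ps)/(pq-1)]}=C'\,t_0^{-\gamma_1},
\]
since $\tilde\gamma_1+(r+ps)/(pq-1)=\frac{(1+p)+(r+ps)}{pq-1}=\frac{(r+1)+(s+1)p}{pq-1}=\gamma_1$. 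As $t_0>0$ is arbitrary, this is the first inequality.

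The case $0<q<1$ is treated exactly as in Proposition~\ref{p3.2} by keeping $u^q$ grouped so that Lemma~\ref{13.1} applies, which produces the estimate for $\|S(t)u(t)^q\|_\infty$ with exponent $q\gamma_1$; the two $v$-estimates follow by the symmetric argument, with $\gamma_2$ in place of $\gamma_1$. The only delicate point is the bookkeeping in the scaling step — verifying that the power of $t_0$ carried by $\lambda$ is precisely $\gamma_1-\tilde\gamma_1$ — together with checking that Proposition~\ref{p3.2}'s induction, and in particular the passage $k\to\infty$, transfers to the frozen-coefficient inequality system on the finite horizon $[0,2t_0]$ using the uniform boundedness of $(\bar u,\bar v)$.
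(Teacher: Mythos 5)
Your proposal is correct, and its skeleton --- translate the solution in time so that $(u(t_0),v(t_0))$ becomes initial data, invoke Proposition~\ref{p3.2}, then set the two times equal --- is exactly the paper's. Where you differ is in the treatment of the time weights, and there your extra work is a genuine improvement. The paper's proof is one line: it asserts that $(u(t+\sigma),v(t+\sigma))$ is again a solution of \eqref{eq2.1} with data $(u(\sigma),v(\sigma))$ and applies \eqref{Estimate22} directly. Strictly speaking the translated pair satisfies the Duhamel identity with weights $(\sigma+\tau)^{r}$, $(\sigma+\tau)^{s}$ rather than $\tau^{r}$, $\tau^{s}$; it is a supersolution of the original system when $r,s\ge 0$ (so the proof of Proposition~\ref{p3.2}, which only uses the lower-bound inequalities, still applies), but for $-1<r<0$ or $-1<s<0$ the comparison of weights points the wrong way and the paper's shortcut does not literally go through. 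Your freezing-and-rescaling step --- bounding $(t_0+\tau)^{r}\ge c_r t_0^{r}$ on $[0,2t_0]$, normalizing the frozen coefficients by $\bar u=\lambda U$, $\bar v=\mu V$ (modulo the typo $V(0)=v(t_0)/\mu$, not $/\lambda$), running the induction of Proposition~\ref{p3.2} for the $r=s=0$ system on the finite horizon, and checking that $\lambda$ carries exactly the power $t_0^{-(\gamma_1-\tilde\gamma_1)}$ --- repairs this for the full range $r,s>-1$, and your exponent bookkeeping ($\tilde\gamma_1+(r+ps)/(pq-1)=\gamma_1$, the extra factor $q$ in the case $0<q<1$, and the symmetric computation for $\gamma_2$) is right. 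The two points you flag as delicate are indeed the only ones needing verification, and both hold: the induction in Proposition~\ref{p3.2} uses only the integral inequalities, nonnegativity of $\Gamma$, $(K_1)$--$(K_2)$, Lemma~\ref{13.1}, and the finiteness of $\|u\|_{L^\infty}$ in the final limit $k\to\infty$, so it transfers verbatim to the inequality system on $[0,2t_0]$, and the resulting constant depends only on $p,q,r,s$ as required.
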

\begin{proof}
Since $(u.v) \in [L^\infty((0,\infty), L^\infty(\mathbb{R}^N))]^2$ is a global-in-time solution of (\ref{eq2.1}), then $(u(t+\sigma),v(t+\sigma))$ for $t > 0$ and for $\sigma >0$ is solution for problem (\ref{eq2.1}) with initial condition $(u(\sigma),v(\sigma))$. Thus, the estimate (\ref{Estimate22}) with $(u(\sigma),v(\sigma))$ instead of $(u_0,v_0)$ is hold. Therefore the result follows by taking $\sigma = t$ in this estimate.
\end{proof}
~~\\

\textbf{Proof of Nonglobal Existence (Theorem \ref{Theorem3} - (i)).} Without loss of generality, we can assume that $\gamma_1=\gamma$. Thus, we have two cases :

\textbf{Case I : $\mathbf{q>1}$.} We argue by contradiction. Suppose that there exists $(u,v),$ a non-trivial global-in-time solution of (\ref{eq2.1}) with initial condition $(u_{0}, v_{0})$, Thus, $u_{0}$ or $v_{0}$ is a non-trivial function. Suppose that $u_{0} \neq 0$, thus by Lemma \ref{Fujish2} and arguing as the proof of Proposition \ref{p3.2}, we have
\begin{equation} \label{BLOW1}
\begin{array}{ll}
u(t) &\geq  [S(t)u_{0}] > 0 \,\, \mbox{ and } \,\, v(t) \geq (s+1)^{-1} [S(t)u_{0}]^{q} t^{s+1} > 0,
\end{array}
\end{equation}
for $t>0$.

Let $w(t):=u(t+\tau)$ and  $z(t):=v(t+\tau)$  for $ t \geq 0 $ and some $\tau\geq1$. Note that, (\ref{BLOW1}) implies that $w(0) \neq  0 , z(0) \neq  0$. Since $(w,z)$ satisfies (\ref{Mild}) with initial condition $(w(0),z(0)) = (u(\tau),v(\tau))$, then by Proposition \ref{p3.2} we have
\begin{equation} \label{BLOW2}
t^{\frac{(r+1) + (s+1) p}{pq-1}} \| S(t) w(0)\|_{\infty} \leq C^{\star},
\end{equation}
for all $t\geq0.$

We can find a non-trivial function $ 0 \leq U_1 \in L^{\infty}(\mathbb{R}^N) $ such that $supp \ U_1 \subset B(t_{0}^{ \frac{1}{2-\alpha}})$ (the ball of center $0$ and radius $t^{\frac{1}{2-\alpha}}_{0}$) for some $t_{0}\geq 1,$ and $0 \leq U_1 \leq w(0).$ By Lemma \ref{Fujish2}, we have
\begin{equation}\label{BLOW5}
S(t)U_1(x) \geq C M  t^{- \frac{N}{2 - \alpha}},\ \ \ \ \ M:= \int_{B(t_{0}^{ \frac{1}{2-\alpha}})} U_1(y) dy,
\end{equation}
for $t \geq t_0,$ $ | x | \leq t^{\frac{1}{2 - \alpha}},$ and $C>0$.

Let us first assume that $ \gamma_1  > \frac{N}{2 - \alpha}.$ From (\ref{BLOW5}) and $(K_3)$, we have
$$ t^{\gamma_1} \| S(t) w(0) \|_{\infty} \geq  t^{\gamma_1} \| S(t) U_1  \|_{\infty}  \geq C M t^{\gamma_1 - \frac{N}{2 - \alpha}},$$
for  all $t\geq t_0.$ But this contradicts \eqref{BLOW2}.

Now consider $ \gamma_1 = \frac{N}{2 - \alpha}.$ Computing similarly as in (\ref{T.uno}), we have
\begin{equation} \label{BLOWIII}
z(t) \geq C t^{s+1} [S(t)w(0)]^{q},
\end{equation}
for $t>0$ and some constant $C>0.$ On the other hand, from \eqref{BLOW5}, we have
\begin{equation} \label{blow11}
[S(t)w(0)](x) \geq C t^{- \frac{N}{2 - \alpha}} = C t^{- \gamma_1} ,
\end{equation}
for $t \geq t_0,$ and $ | x | \leq t^{\frac{1}{2 - \alpha}}$.

Note that, $t+1-\sigma \leq t $ and $\sigma \leq t + 1 - \sigma $ for $1\leq \sigma \leq t/2 $. Thus, from (\ref{Mild}), $(K_3)$, $(K_4)$, $(K_5)$, (\ref{BLOWIII}), and (\ref{blow11}), we get
\begin{equation} \label{BLOW4}
\begin{array}{ll}
\displaystyle \int_{ | x | \leq (t+1)^{\frac{1}{2 - \alpha}}} w(x,t+1) dx
\\ \noalign{\medskip}
\geq \displaystyle \int_{ | x | \leq t^{\frac{1}{2 - \alpha}}} w(x,t+1) dx
\\ \noalign{\medskip}
\geq \displaystyle \int_{ | x | \leq t^{\frac{1}{2 - \alpha}}} \int_{1}^{\frac{t}{2}} \int_{ | y | \leq (t+1-\sigma)^{\frac{1}{2 - \alpha}}} \sigma^{r} \ \Gamma(x,y,t+1-\sigma) z(y,\sigma)^{p} dy d \sigma dx
\\ \noalign{\medskip}
\geq \displaystyle   \int_{1}^{\frac{t}{2}} \int_{ | y | \leq (t+1-\sigma)^{\frac{1}{2 - \alpha}}} \sigma^{r} \left(  \int_{ | x | \leq (t+1-\sigma)^{\frac{1}{2 - \alpha}}} \Gamma(x,y,t+1-\sigma) dx \right)  z^{p} dy d \sigma
\\ \noalign{\medskip}
\geq \displaystyle  C  \int_{1}^{\frac{t}{2}} \int_{| y | \leq (t+1-\sigma)^{\frac{1}{2 - \alpha}}} \sigma^{r} (\sigma^{s+1} [S(\sigma)w(0)]^{q})^{p} dy d \sigma \\
\geq \displaystyle  C  \int_{t_0}^{\frac{t}{2}} \int_{ | y | \leq (t+1-\sigma)^{\frac{1}{2 - \alpha}}} \sigma^{r + (s+1)p} [S(\sigma)w(0)]^{\beta - 1} [S(\sigma)w(0)] dy d \sigma
\\ \noalign{\medskip}
\geq \displaystyle   C  \int_{t_0}^{\frac{t}{2}}  \sigma^{r + (s+1)p} \cdot \sigma^{-(\beta-1)\gamma_1} \left(\int_{ | y | \leq \sigma^{\frac{1}{2 - \alpha}}} \sigma^{-\gamma_1} dy\right) d \sigma
\\ \noalign{\medskip}
\geq C \ \displaystyle  \int_{t_0}^{\frac{t}{2}} d \sigma, \mbox{ for all } t>0 \mbox{ sufficiently large } (t>2t_0\geq2).
\end{array}
\end{equation}
By (\ref{BLOW4}), we see that for every $R>0$ it is possible to find $t_{2} > 1$ such that the function $U_2$ defined by $U_2(x) := w(x, t_2) \in L^{\infty}(\mathbb{R}^N)$ satisfies
\begin{equation} \label{BLOW9}
\begin{array}{rll}
\displaystyle \int_{| x | \leq t^{\frac{1}{2 - \alpha}}_{2}} U_{2}(x) dx \geq R.
\end{array}
\end{equation}
Now consider $ (w_1(t),z_1(t)) = (w(t + t_{2}), z(t + t_{2})).$ Note that $(w_1(t),z_1(t))$ is a global-in-time solution of  problem (\ref{Mild}) with initial condition $(w_1(0),z_1(0)) = (U_2(x), z( t_{2}))$. Therefore, from Proposition \ref{p3.2}, we have
\begin{equation} \label{BLOW10}
 t^{\gamma_1} \| S(t) U_2\|_{\infty} \leq C^{\star}, \ \ \ for \ all \ t\geq0.
\end{equation}
On the other hand, from (\ref{BLOW9}) and Lemma \ref{Fujish2}, we have
$$ S(t)U_2(x) \geq C(\alpha,N)^{-1} R t^{-\frac{N}{2- \alpha}},$$
for $ | x |  \leq t^{\frac{1}{2-\alpha}}$ and $t > t_2.$ This implies that
$$ t^{\gamma_1} \| S(t)  U_2 \|_{\infty} = t^{\frac{N}{2-\alpha}} \| S(t)  U_2 \|_{\infty}  \geq C(\alpha,N)^{-1} R,$$
for all $t > t_0.$ This contradicts inequality \eqref{BLOW10} due to arbitrariness of $R>0.$ \\

\textbf{Case II : $\mathbf{q<1}$.} We argue by contradiction. Suppose that there exists a global-in-time solution $(u,v) $ of problem (\ref{eq2.1}) with initial condition $(u_0,v_0) \in [L^\infty(\mathbb{R}^N)]^2$, $u_0,v_0 \geq 0;$ without loss of generality, we can assume that $u_0 \neq 0 $ and $v_0 \neq 0  $ (see (\ref{BLOW1})).

Suppose that $ \gamma_1 > \frac{N}{2 - \alpha} $. We can find a non-trivial function $ 0 \leq U_3 \in L^{\infty}(\mathbb{R}^N) $ such that $supp \ U_3 \subset B(t_{0}^{ \frac{1}{2-\alpha}})$ for some $t_{0}\geq 1$ and $0 \leq U_3 \leq u_0$. Thus, arguing similarly as in (\ref{BLOW5}), and since $ \Gamma \geq 0$ (by $(K_3)$), we have
$$ u(x,t) \geq [S(t)u_0](x) \geq C t^{-\frac{N}{2 - \alpha} } \mathcal{X}_{t^{\frac{1}{2-\alpha}} }(x),$$
for $t\geq t_0 $  and some constant $C>0$, where $\mathcal{X}_{t^{\frac{1}{2-\alpha}} }$ is the characteristic function on the ball of center $0$ and radius $t^{\frac{1}{2-\alpha}}$. It follows from here that
\begin{equation} \label{BLOWQ}
t^{q \gamma_1} \| S(t) u(t)^{q}\|_{\infty} \geq C  t^{q (\gamma_1 - \frac{N}{2 - \alpha}) } S(t) \mathcal{X}_{t^{\frac{1}{2-\alpha}} }(x).
\end{equation}
for $t \geq t_0 $. Besides, by $(K_5)$, we have
\begin{equation} \label{BBBLOW}
S(t)\mathcal{X}_{t^{\frac{1}{2-\alpha}} }(x) \geq  \displaystyle \int_{ | y | < t^{\frac{1}{2 - \alpha}}} \Gamma(x,y,t) dy \geq C t^{- \frac{N}{2-\alpha}}\cdot t^{ \frac{N}{2-\alpha}},
\end{equation}
for all $| x | \leq t^{\frac{1}{2-\alpha}} $ and $t>0$. Thus, (\ref{BLOWQ}) contradicts the second inequality in Corollary \ref{COR}.

Now assume $ \gamma_1 = \frac{N}{2 - \alpha}.$ Proceeding similarly as in estimates (\ref{BLOWIII}) and (\ref{BLOWQ}), we get that
$$ v(t) \geq C t^{s+1} S(t)u^q(t),$$
for all $t>0,$ and
$$ t^{s+1} S(t) u(x,t)^q \geq C t^{s+1} \cdot t^{-q \gamma_1} \ \ (\mbox{see } \eqref{BBBLOW} ),$$
for $ | x | \leq t^{\frac{1}{2-\alpha}} $ and $t> t_0$, respectively. From here and proceeding as in the obtention of \eqref{BLOW4}, we have
\begin{equation*}
\begin{array}{ll}
 \displaystyle \int_{| x | \leq t^{\frac{1}{2 - \alpha}}} u(x,t+1) dx \\
 \noalign{\medskip}
 \geq C \displaystyle \int_{1}^{t/2} \int_{| y | \leq (t+1-\sigma)^{\frac{1}{2 - \alpha}}} \sigma^{r} (\sigma^{s+1} [S(\sigma)u(y,\sigma)^{q}])^{p} dy \ d \sigma \\ \noalign{\medskip}
\geq \displaystyle C  \int_{t_0}^{t/2} \int_{| y | \leq \sigma^{\frac{1}{2 - \alpha}}} \sigma^{r + (s+1)p} [\sigma^{-\gamma_1}]^{\beta - 1} [\sigma^{-\gamma_1}] dy \ d \sigma \\ \noalign{\medskip}
\geq \displaystyle C  \int_{t_0}^{t/2}  \sigma^{r + (s+1)p} \cdot \sigma^{-(\beta-1)\gamma_1} \left(\int_{| y | \leq \sigma^{\frac{1}{2 - \alpha}}} \sigma^{-\gamma_1} dy\right) \ d \sigma
\\ \noalign{\medskip}
\geq C \displaystyle \int_{t_0}^{t/2} d \sigma, \mbox{ for all } t>0 \mbox{ sufficiently large. } (t>2t_0\geq2).
\end{array}
\end{equation*}
Thus, from Corollary \ref{COR}, and arguing similarly as in Case II, we obtain a contradiction.

\section{Global Existence}

\subsection{Local Existence}

We first establish the local existence of solutions when $p>1$ and $q>1.$ Later, we show the local existence for the general case using an approximations method (see the proof of Corollary \ref{Corollary}). \\

\begin{lem}[Comparison Principle] \label{Comparison} Assume either (A) or (B) is in force, and $(u_{0,i},v_{0,i}) \in [L^\infty(\mathbb{R}^N)]^2 \, (i=1,2).$ Let $f,g:[0,\infty) \rightarrow [0,\infty)$ locally Lipschitz continuous functions, $r,s>-1,$ and $(u_i,v_i) \in L^{\infty}((0,T), L^{\infty}(\mathbb{R}^N)) \, (i=1,2)$ such that
\begin{equation*}
\begin{array}{rllllll}
 u_i(x,t) &=& \displaystyle \int_{\mathbb{R}^N} \Gamma(t,x,y) u_{0,i}(y)dy + \displaystyle \int_{0}^{t} \int_{\mathbb{R}^N} \Gamma(t - \sigma,x,y) \sigma^{r} f(v_i(y,\sigma)) dy d \sigma,&
 \\ \noalign{\medskip}
 v_i(x,t) &=& \displaystyle \int_{\mathbb{R}^N} \Gamma(t,x,y) v_{0,i}(y)dy  + \displaystyle \int_{0}^{t}\int_{\mathbb{R}^N} \Gamma(t - \sigma,x,y) \sigma^{s} g(u_i(y,\sigma)) dy d\sigma, &
\end{array}
\end{equation*}
for almost $x \in \mathbb{R}^N$ and $t>0$. If $ u_{0,1} \leq u_{0,2}$ and $v_{0,1}\leq v_{0,2}$, then $u_1(t) \leq u_2(t)$ and $v_1(t)\leq v_2(t)$  for all $t \in (0,T) $.
\end{lem}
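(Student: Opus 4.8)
The plan is to prove the comparison principle by a monotone-iteration (successive-approximation) scheme combined with a localization/Gronwall argument to handle the fact that $f,g$ are only \emph{locally} Lipschitz. Since $(u_i,v_i)\in L^\infty((0,T),L^\infty(\mathbb{R}^N))$, set $M:=\max_i\bigl(\|u_i\|_{L^\infty((0,T)\times\mathbb{R}^N)}+\|v_i\|_{L^\infty((0,T)\times\mathbb{R}^N)}\bigr)$. On the compact interval $[0,M]$ both $f$ and $g$ are genuinely Lipschitz, say with common constant $L>0$. The whole argument then only ever evaluates $f,g$ at arguments in $[0,M]$, so I may treat them as globally Lipschitz for the purposes of the estimates.

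The main step is to subtract the two mild formulations and estimate the differences. Write $U:=u_2-u_1$ and $V:=v_2-v_1$, and let $U^+,V^+$ denote their positive parts. Subtracting the integral equations and using the hypotheses $u_{0,1}\le u_{0,2}$, $v_{0,1}\le v_{0,2}$ together with the nonnegativity of $\Gamma$ (Remark after $(\mathrm{K}_3)$), I get
\begin{equation*}
U(x,t)\ \ge\ \int_0^t\!\!\int_{\mathbb{R}^N}\Gamma(t-\sigma,x,y)\,\sigma^r\bigl[f(v_2(y,\sigma))-f(v_1(y,\sigma))\bigr]\,dy\,d\sigma,
\end{equation*}
and the analogous lower bound for $V$ with $\sigma^s$ and $g(u_2)-g(u_1)$. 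The Lipschitz bound gives $|f(v_2)-f(v_1)|\le L|V|$ and $|g(u_2)-g(u_1)|\le L|U|$, so $f(v_2)-f(v_1)\ge -L\,V^+$ and likewise for $g$. Hence, using $\int_{\mathbb{R}^N}\Gamma\,dy=1$ from $(\mathrm{K}_1)$ and taking positive parts, for each fixed $t$ I obtain pointwise in $x$ the bound
\begin{equation*}
U^+(x,t)\ \le\ L\int_0^t\sigma^r\,\bigl[S(t-\sigma)V^+(\sigma)\bigr](x)\,d\sigma,
\end{equation*}
and symmetrically $V^+(x,t)\le L\int_0^t\sigma^s\,[S(t-\sigma)U^+(\sigma)](x)\,d\sigma$. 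Taking the $L^\infty(\mathbb{R}^N)$ norm in $x$ and using that $S(t)$ is an $L^\infty$-contraction (again from $(\mathrm{K}_1)$ and $\Gamma\ge0$), and setting $\phi(t):=\|U^+(t)\|_\infty+\|V^+(t)\|_\infty$, I arrive at a closed scalar inequality of Gronwall type,
\begin{equation*}
\phi(t)\ \le\ L\,(T^r+T^s)\int_0^t\phi(\sigma)\,d\sigma,\qquad t\in(0,T),
\end{equation*}
(absorbing the $\sigma^r,\sigma^s$ factors into the constant using $r,s>-1$ and boundedness of $\sigma\mapsto\sigma^r$ near $0$ in the integrable sense). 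Since $\phi\ge0$ and $\phi$ is bounded, Gronwall's inequality forces $\phi\equiv0$, i.e. $U^+\equiv V^+\equiv0$, which is exactly $u_1\le u_2$ and $v_1\le v_2$.

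The step I expect to be the main obstacle is the passage to positive parts in a coupled system: unlike the scalar case, one cannot immediately conclude $U\ge0$ or $V\ge0$ separately, because each difference is controlled by the \emph{other} difference, so I must keep both $U^+$ and $V^+$ coupled and close the estimate only on the sum $\phi$. A secondary technical point is the integrability of $\sigma^r$ and $\sigma^s$ at $\sigma=0$; because $r,s>-1$ these are locally integrable, so the time integrals are finite and the constant in the Gronwall inequality is finite, but one should be slightly careful writing the estimate so that the singular weight is handled by $\int_0^t\sigma^r\,d\sigma<\infty$ rather than by a crude supremum bound. Once these two points are dispatched, the conclusion is immediate from Gronwall.
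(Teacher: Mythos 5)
Your overall strategy is the paper's: subtract the two mild formulations, reduce to a genuine Lipschitz constant $L$ on $[0,M]$ using the $L^\infty$ bound, close a Gronwall inequality on the sum of the positive parts of the two coupled differences using that $S(t)$ is an $L^\infty$-contraction, and absorb the weights via the integrability of $\sigma^r,\sigma^s$ for $r,s>-1$ (the paper invokes a weakly singular Gronwall inequality from Webb for this last point; your remark that one must keep $\int_0^t\sigma^r\,d\sigma$ rather than a supremum is the right one). However, there is a genuine gap at the key pointwise step. The inequality $f(v_2)-f(v_1)\ge -L\,V^+$ is false: Lipschitz continuity alone only gives $f(v_2)-f(v_1)\ge -L|V|=-L(V^++V^-)$, and on the set where $V<0$ your right-hand side is $0$ while the left-hand side can be strictly negative. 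Once the $V^-$ term is present the Gronwall loop does not close, because the quantity you can afford to carry is only the ``wrong-sign'' part of each difference, not $|V|$. This is not a technicality: without a monotonicity assumption the lemma is simply false (take the spatially homogeneous version $u'=t^rf(v)$, $v'=t^sg(u)$ with $f$ strictly decreasing, $u_{0,1}=u_{0,2}$ and $v_{0,1}<v_{0,2}$; then $u_1>u_2$ for small $t$). The paper's proof silently adds the hypothesis that $f,g$ are nondecreasing (true for $f(s)=s^p$, $g(s)=s^q$, the only case used), which gives $f(v_1)-f(v_2)\le L\,(v_1-v_2)^+$ and lets the estimate close on $\|(u_1-u_2)^+\|_\infty+\|(v_1-v_2)^+\|_\infty$. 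You need to state and use this monotonicity.

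There is also a sign-bookkeeping error that compounds the problem. With $U=u_2-u_1$, proving $U^+\equiv0$ would give $u_2\le u_1$, the reverse of the desired conclusion; the quantity that must vanish is $(u_1-u_2)^+=U^-$. Moreover, the step ``taking positive parts'' converts the one-sided lower bound $U\ge -A$ (with $A\ge0$) into an upper bound on $U^+$, which is a non sequitur: a lower bound on $U$ controls $U^-$, not $U^+$. The corrected chain is: monotonicity plus Lipschitz give $f(v_1)-f(v_2)\le L\,(v_1-v_2)^+$, hence $u_1-u_2\le L\int_0^t\sigma^r S(t-\sigma)(v_1-v_2)^+(\sigma)\,d\sigma$, hence $\|(u_1-u_2)^+(t)\|_\infty\le L\int_0^t\sigma^r\|(v_1-v_2)^+(\sigma)\|_\infty\,d\sigma$, and symmetrically for $v$; then Gronwall with the integrable kernel $\sigma^r+\sigma^s$ finishes the argument exactly as you intended.
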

\begin{proof} Note that it is sufficient to show that $ [u_i - v_i]^{+} = 0 \, (i=1,2).$ Let $M_0 = \max \{ \|u_i(t)\|_{\infty}, \|v_i(t)\|_{\infty}: t \in [0,T], \, i=1,2 \} .$ Since that $u_{0,1}\leq u_{0,2} $ and $v_{0,1}\leq v_{0,2}$, from $(K_2)$ we have
\begin{eqnarray*}
u_1(t) - u_2(t) & \leq  \displaystyle \int_{0}^{t} S(t- \sigma)  \sigma^r [f(v_1(\sigma)) - f(v_2(\sigma))  ]  d\sigma, \\
v_1(t) - v_2(t) & \leq \displaystyle  \int_{0}^{t} S(t- \sigma) \sigma^s [g(u_1(\sigma)) - g(u_2(\sigma))  ] d\sigma.
\end{eqnarray*}
Thus, since that $ f $ and $g $ are nondecreasing and Lipschitz continuous on $[0,M_0] $, it follows from $(G_1)$ that
\begin{eqnarray*}
\| [u_1(t) - u_2(t)]^{+} \|_{\infty} &  \leq & C \int_{0}^{t} \sigma^r \| [v_1(\sigma) - v_2(\sigma)]^{+} \|_{\infty}  ~ d\sigma, \\
\| [v_1(t) - v_2(t)]^{+} \|_{\infty} &  \leq & C \int_{0}^{t} \sigma^s \| [u_1(\sigma) - u_2(\sigma)]^{+} \|_{\infty}  ~ d\sigma,
\end{eqnarray*}
The Lemma is now a direct consequence of Gronwall's inequality (for example, see \cite{Webb}).
\end{proof}
~~\\

\begin{thm} \label{Fujish3} Suppose that $p,q >1$ and assume that $\omega$ satisfies either $(A)$ or $(B)$, and $(u_{0},v_{0}) \in [L^{\infty}(\mathbb{R}^N)]^{2},$ $u_0,v_0\geq0$. Then there exists $ T>0$ and a constant $C_0 >0$ such that problem (\ref{eq2.1}) possesses a unique solution $(u,v)$ on $(0,T)$ satisfying
\begin{equation}\label{Local}
\sup_{0<t<T} ( \|u(t)\|_{\infty} + \|v(t)\|_\infty ) \leq C_{0} (\|u_{0}\|_{\infty} +  \|v_{0}\|_{\infty} ).
\end{equation}
\end{thm}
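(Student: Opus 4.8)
The plan is to construct the solution by a contraction-mapping (Banach fixed point) argument applied to the mild formulation \eqref{Mild}. I set $X_T := [L^{\infty}((0,T);L^{\infty}(\mathbb{R}^N))]^2$ with norm $\|(u,v)\|_{X_T} := \sup_{0<t<T}(\|u(t)\|_\infty + \|v(t)\|_\infty)$, and I work inside the closed, nonnegative ball
\[
B_K := \{(u,v) \in X_T : u,v \geq 0 \ \mbox{a.e.}, \ \|(u,v)\|_{X_T} \leq K\},
\]
which is a complete metric space. On $B_K$ I define $\Phi(u,v) = (\Phi_1(u,v),\Phi_2(u,v))$ by
\[
\Phi_1(u,v)(t) := S(t)u_0 + \int_0^t S(t-\sigma)\sigma^r v^p(\sigma)\,d\sigma, \qquad \Phi_2(u,v)(t) := S(t)v_0 + \int_0^t S(t-\sigma)\sigma^s u^q(\sigma)\,d\sigma,
\]
so that a fixed point of $\Phi$ is exactly a solution in the sense of Definition \ref{DEF}. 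Restricting to nonnegative pairs makes the powers $v^p,u^q$ well defined for non-integer $p,q$; since $u_0,v_0\geq 0$ and $\Gamma\geq 0$ by $(K_3)$, the map $\Phi$ preserves nonnegativity.

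First I would record the contraction-type bound for the semigroup: from $(K_1)$ and $\Gamma\geq 0$ one has $\|S(t)\phi\|_\infty \leq \|\phi\|_\infty$ (this is $(G_1)$ with $q_1=q_2=\infty$). Combined with the crude estimate $\|v^p(\sigma)\|_\infty \leq K^p$ on $B_K$ and the integrability of the weight near the origin, which is where $r>-1$ enters, I obtain
\[
\|\Phi_1(u,v)(t)\|_\infty \leq \|u_0\|_\infty + K^p\int_0^t \sigma^r\,d\sigma = \|u_0\|_\infty + \frac{t^{r+1}}{r+1}K^p,
\]
and symmetrically for $\Phi_2$. Choosing $K := 2(\|u_0\|_\infty + \|v_0\|_\infty)$ and then $T$ small enough that $\tfrac{T^{r+1}}{r+1}K^{p-1} + \tfrac{T^{s+1}}{s+1}K^{q-1} \leq \tfrac12$ forces $\Phi$ to map $B_K$ into itself; this choice of $K$ is precisely what yields the claimed bound \eqref{Local} with $C_0=2$.

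For the contraction estimate I would use that $t\mapsto t^p$ and $t\mapsto t^q$ are Lipschitz on $[0,K]$ exactly because $p,q>1$ (their derivatives $pt^{p-1},qt^{q-1}$ are bounded there), giving $|a^p-b^p|\leq pK^{p-1}|a-b|$ for $a,b\in[0,K]$. Hence for $(u_1,v_1),(u_2,v_2)\in B_K$,
\[
\|\Phi_1(u_1,v_1)(t) - \Phi_1(u_2,v_2)(t)\|_\infty \leq pK^{p-1}\int_0^t \sigma^r\|v_1(\sigma)-v_2(\sigma)\|_\infty\,d\sigma \leq \frac{pK^{p-1}T^{r+1}}{r+1}\|(u_1,v_1)-(u_2,v_2)\|_{X_T},
\]
and likewise for $\Phi_2$. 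Summing, the Lipschitz constant of $\Phi$ is at most $\tfrac{pK^{p-1}T^{r+1}}{r+1} + \tfrac{qK^{q-1}T^{s+1}}{s+1}$, which is $<1$ after shrinking $T$ further. Banach's theorem then produces a unique fixed point $(u,v)\in B_K$, the desired solution, satisfying $\|(u,v)\|_{X_T}\leq K$, i.e. \eqref{Local}.

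For uniqueness among \emph{all} solutions in $L^\infty((0,T);L^\infty)$, not merely those in $B_K$, I would compare two solutions directly: subtracting their mild formulations, using $\|S(t)\cdot\|_\infty\leq\|\cdot\|_\infty$ and the local Lipschitz bound for the nonlinearities on a common bound $M_0$, one arrives at a coupled pair of integral inequalities of exactly the form treated in Lemma \ref{Comparison}, and Gronwall's inequality forces the two solutions to agree. The only delicate point is the interplay between the time weights $\sigma^r,\sigma^s$ and the nonlinearities: the argument leans on $r,s>-1$ to keep $\sigma^r,\sigma^s$ integrable at $0$, and on $p,q>1$ to make the nonlinearities Lipschitz on bounded sets. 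This is precisely why the range $0<p<1$ (or $0<q<1$) is excluded here and handled afterward by the approximation method.
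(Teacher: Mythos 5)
Your proof is correct, but it takes a genuinely different route from the paper. The paper constructs the solution by monotone iteration: it defines the Picard iterates $u_{n+1}=S(t)u_0+\int_0^t\sigma^r S(t-\sigma)v_n(\sigma)^p\,d\sigma$ (and symmetrically for $v_{n+1}$), observes that they are non-decreasing in $n$ because $\Gamma\geq 0$, proves the uniform bound $\sup_{0<t<T}(\|u_n(t)\|_\infty+\|v_n(t)\|_\infty)\leq 2c_1(\|u_0\|_\infty+\|v_0\|_\infty)$ by induction for $T$ small, and passes to the monotone limit; uniqueness is then obtained separately from the comparison principle (Lemma \ref{Comparison}), exactly the Gronwall argument you sketch at the end. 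You instead run a Banach fixed point argument on the nonnegative ball $B_K$, which delivers existence and uniqueness-in-$B_K$ in one stroke but requires the local Lipschitz bound $|a^p-b^p|\leq pK^{p-1}|a-b|$, i.e.\ $p,q\geq 1$, already at the construction stage. The trade-off is real: the paper's monotone scheme uses no Lipschitz property to build the solution (only positivity and the crude $L^\infty$ bound), which is why the same iteration reappears essentially unchanged in the global-existence proof of Theorem \ref{Theorem3}(ii) with Lorentz-norm controls, and why it meshes cleanly with the monotone approximation argument of Corollary \ref{Corollary} for $0<p<1$; your contraction argument is more self-contained for the stated range $p,q>1$ and gives the explicit constant $C_0=2$ rather than $2c_1$, but would have to be abandoned entirely in the non-Lipschitz case. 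Both proofs lean on the same weakly singular Gronwall inequality (the weights $\sigma^r,\sigma^s$ with $r,s>-1$) for uniqueness among all bounded solutions, so your final paragraph is in substance identical to the paper's Lemma \ref{Comparison}. One cosmetic remark: you do not need $(G_1)$ for the $L^\infty\to L^\infty$ bound, since $(K_1)$ and $\Gamma\geq 0$ give $\|S(t)\phi\|_\infty\leq\|\phi\|_\infty$ directly, as you note.
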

\begin{proof}
For any $(u_0, v_0) \in [L^\infty(\mathbb{R}^N)]^2$, $u_0, v_0 \geq 0$. We define the sequences $\{ u_n\}_{n \geq 1}$ and $\{ v_n \}_{n \geq 1}$ by
\begin{equation*}
u_1(x,t) = \int_{\mathbb{R}^N } \Gamma(x,y,t)u_0(y) dy, \ v_1(x,t) = \int_{\mathbb{R}^N } \Gamma(x,y,t)v_0(y) dy
\end{equation*}
\begin{equation}\label{desig24}
\begin{array}{ll}
u_{n+1}(x,t) = u_1(x,t) + \displaystyle \int_0^t \sigma^r \int_{\mathbb{R}^N} \Gamma(x,y,t-\sigma)v_n(y,s)^p dy d\sigma, \ n= 1,2, \cdots,
\\ \noalign{\medskip}
v_{n+1}(x,t) = v_1(x,t) + \displaystyle \int_0^t  \sigma^s \int_{\mathbb{R}^N} \Gamma(x,y,t-\sigma)u_n(y,s)^q dy d\sigma, \ n= 1,2, \cdots ,
\end{array}
\end{equation}
for almost all $x \in \mathbb{R}^N$ and all $t>0$. The sequences are non-negative and non-decreasing, that is,
\begin{equation}\label{d25}
0\leq u_n(x,t) \leq u_{n+1}(x,t) \mbox{ and } 0 \leq v_n(x,t) \leq v_{n+1}(x,t)
\end{equation}
for almost all $x \in \mathbb{R}^N$, $t>0,$ and all $n \in \mathbb{N}$. This is clear since $\Gamma$, $u_0,$ and $v_0$ are non-negative functions ($\Gamma$ is nonnegative by $(K_3)$ property). Thus, we write the limit of the functions.
\begin{equation}\label{d26}
u_\infty(x,t) = \lim\limits_{n \to \infty} u_n(x,t), \, \,\,\, v_\infty = \lim\limits_{n \to \infty} v_n(x,t).
\end{equation}
Furthermore, we see that $u_\infty(x,t), v_\infty(x,t) \in [0,\infty]$.

Now, we show that the sequences $\{ u_n \}_{n \geq 1}$ and $\{ v_n \}_{n \geq 1}$ are bounded in a small interval of time, that is,
\begin{equation}\label{desig31}
\sup\limits_{0<t<T} (\|u_n(t)\|_{\infty} + \|v_n(t)\|_{\infty}) \leq 2 c_1 (\|u_0\|_{\infty} + \|v_0\|_{\infty})
\end{equation}
for all $n \in \mathbb{N}$ and some $T>0$ small enough. Let $T>0,$ and we argue by induction. For $n=1$, the inequality \eqref{desig31} is hold, and this is due to $(G_1)$. Let us suppose \eqref{desig31} holds, for some $k \in \mathbb{N}$, that is,
\begin{equation*}
\sup\limits_{0<t<T} (\|u_k(t)\|_{\infty} + \|v_k(t)\|_{\infty}) \leq 2 c_1 (\|u_0\|_{\infty} + \|v_0\|_{\infty}).
\end{equation*}
Then, by $(G1)$, we have
\begin{equation}\label{desig32}
\begin{array}{ll}
\|u_{k + 1}(t) \|_{\infty }
& \leq \|u_1(t)\|_{\infty} + \displaystyle \int_0^t \sigma^r \|S(t-\sigma) v_{k}(\sigma)^p \|_{\infty} d\sigma
\\ \noalign{\medskip}
& \leq c_1 \|u_0\|_{\infty} + c_1 \displaystyle \int_0^t \sigma^r  \|v_{k}(\sigma) \|_{\infty}^p d\sigma
\\ \noalign{\medskip}
& \leq c_1 \|u_0\|_{\infty} + c_1 (2 c_1(\|u_0\|_{\infty} + \|v_0\|_{\infty}))^p \displaystyle \int_0^t \sigma^r d\sigma,
\end{array}
\end{equation}
for all $t \in (0,T)$. Similarly, we have
\begin{equation}\label{desig33}
\begin{array}{ll}
\|v_{k + 1}(t) \|_{\infty}
& \leq \|v_1(t)\|_{\infty} + \displaystyle \int_0^t \sigma^s \|S(t-\sigma) u_{k}(\sigma)^p \|_{\infty} d\sigma
\\ \noalign{\medskip}
& \leq c_1 \|v_0\|_{\infty} + c_1 \displaystyle \int_0^t \sigma^s  \|u_{k}(\sigma) \|_{\infty}^q d\sigma
\\ \noalign{\medskip}
& \leq c_1 \|v_0\|_{\infty} + c_1 (2 c_1(\|u_0\|_{\infty} + \|v_0\|_{\infty}))^q \displaystyle \int_0^t \sigma^s d\sigma,
\end{array}
\end{equation}
for all $t \in (0,T)$. Thus, the inequality  \eqref{desig31} follows by adding \eqref{desig32} and \eqref{desig33} and then choosing $T$ as small enough.

Finally, by \eqref{d25}, \eqref{d26}, and \eqref{desig31}, we have that the limits functions $u_\infty$ and $v_\infty$ satisfies \eqref{Mild} and
\begin{equation*}
\sup\limits_{0<t<T} (\|u_\infty (t)\|_{\infty} + \|v_\infty (t)\|_{\infty}) \leq 2 c_1 (\|u_0\|_{\infty} + \|v_0\|_{\infty}).
\end{equation*}
Also, by the comparison principle (see Lemma \ref{Comparison}), $(u_\infty,v_\infty)$ is the unique solution to the problem \eqref{eq2.1}. Therefore, Theorem \ref{Fujish3} is hold.
\end{proof}
~~\\

Now we proof the local existence of solutions of problem \eqref{eq2.1}, in the general case, that is, when $p,q >0$ and $p\cdot q>1$.

\begin{cor}[Local Existence] \label{Corollary} Suppose that $p,q >0$ with $p\cdot q>1,$ and assume that $\omega$ satisfies either $(A)$ or $(B)$, and $(u_{0},v_{0}) \in [L^{\infty}(\mathbb{R}^N)]^{2},$ $u_0,v_0\geq0$. Then there exists $ T>0$ and a constant $C_0 >0$ such that problem (\ref{eq2.1}) possesses a solution $(u,v)$ on $[0,T]$ satisfying
\begin{equation*}
\sup_{0<t<T} ( \|u(t)\|_{\infty} + \|v(t)\|_\infty ) \leq C_{0} (\|u_{0}\|_{\infty} +  \|v_{0}\|_{\infty} ).
\end{equation*}
\end{cor}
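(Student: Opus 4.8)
The plan is to overcome the failure of Lipschitz continuity of the nonlinearities at the origin, which occurs precisely when $0<p<1$ or $0<q<1$: the map $v\mapsto v^{p}$ with $p<1$ is not locally Lipschitz near $v=0$, so neither the uniqueness argument in Theorem \ref{Fujish3} nor the comparison principle Lemma \ref{Comparison} can be invoked directly on \eqref{eq2.1}. I would therefore replace $v\mapsto v^{p}$ and $u\mapsto u^{q}$ by globally Lipschitz, nondecreasing functions that increase pointwise to the original powers, solve each regularized system by the method already developed, and pass to a monotone limit. The main obstacle will be obtaining an existence time $T$ and a bound as in \eqref{Local} that are \emph{uniform in the approximation parameter}, so that the limit lives on a nondegenerate interval.

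First I would construct the approximating nonlinearities. For a sublinear exponent (say $0<p<1$) set, for $n\in\mathbb{N}$,
\[
f_{n}(v)=\begin{cases} v^{p}, & v\geq 1/n,\\[1mm] n^{1-p}\,v, & 0\leq v<1/n,\end{cases}
\]
and leave $f_{n}(v)=v^{p}$ if $p\geq 1$; define $g_{n}$ from $u^{q}$ in the same way. Each $f_{n}$ is continuous, nondecreasing, globally Lipschitz on $[0,\infty)$ with constant $n^{1-p}$, satisfies $0\leq f_{n}(v)\leq v^{p}$, and $f_{n}(v)\uparrow v^{p}$ as $n\to\infty$ for every $v\geq 0$; the same holds for $g_{n}$.

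Next, for each fixed $n$ I would solve the regularized system obtained from \eqref{eq2.1} by replacing $v^{p},u^{q}$ with $f_{n}(v),g_{n}(u)$. Since $f_{n},g_{n}$ are Lipschitz, the monotone iteration in the proof of Theorem \ref{Fujish3} carries over verbatim, producing a solution $(u_{n},v_{n})$, unique by Lemma \ref{Comparison}. The crucial observation is that the a priori bound \eqref{desig31} is uniform in $n$: the estimates \eqref{desig32}--\eqref{desig33} use only $(G_{1})$ together with $\|f_{n}(v)\|_{\infty}\leq\|v\|_{\infty}^{p}$ and $\|g_{n}(u)\|_{\infty}\leq\|u\|_{\infty}^{q}$, which are independent of $n$. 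Hence the time $T>0$ and the constant in \eqref{Local} can be chosen the same for all $n$.

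Finally I would pass to the limit. Because $f_{n}\leq f_{n+1}$, $g_{n}\leq g_{n+1}$ and the initial data coincide, Lemma \ref{Comparison} yields $u_{n}\leq u_{n+1}$ and $v_{n}\leq v_{n+1}$; with the uniform bound this gives monotone, bounded sequences, so $u:=\lim_{n}u_{n}$ and $v:=\lim_{n}v_{n}$ exist and satisfy the bound \eqref{Local}, placing $(u,v)$ in the class of Definition \ref{DEF}. To pass the limit through the integrals in \eqref{Mild} one checks $f_{n}(v_{n})\to v^{p}$ a.e.: from $f_{n}(v_{n})\leq v_{n}^{p}\leq v^{p}$ and, for each fixed $m\leq n$, $f_{n}(v_{n})\geq f_{m}(v_{n})\to f_{m}(v)$, so $\liminf_{n}f_{n}(v_{n})\geq\sup_{m}f_{m}(v)=v^{p}$, whence $f_{n}(v_{n})\to v^{p}$; similarly $g_{n}(u_{n})\to u^{q}$. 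Dominated convergence, with the integrands controlled by $\sigma^{r}\Gamma(x,y,t-\sigma)\|v\|_{\infty}^{p}$ (integrable since $r>-1$ and $\int_{\mathbb{R}^{N}}\Gamma(x,y,t-\sigma)\,dy=1$ by $(K_{1})$), then shows that $(u,v)$ satisfies \eqref{Mild} with the stated estimate, completing the proof.
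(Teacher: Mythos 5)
Your argument is correct, and it shares the paper's overall strategy---regularize the sublinear nonlinearity by globally Lipschitz truncations, solve each regularized system by the monotone iteration of Theorem \ref{Fujish3} with an existence time and bound uniform in $n$ (both indeed only use $f_n(s)\le s^p$), then pass to a monotone limit---but the mechanism producing the monotone family is genuinely different. You approximate \emph{from below}, $f_n\uparrow s^p$ with the initial data held fixed, so your approximate solutions increase in $n$, and the limit passage requires your $\liminf/\limsup$ argument that $f_n(v_n)\to v^p$ a.e.\ together with dominated convergence. The paper instead takes $f_n(s)=s^p$ for $s>\tfrac{1}{2n}$ and, crucially, lifts the initial datum to $v_0+\tfrac1n$: by \eqref{Loc1} the approximate solution satisfies $v_n\ge \tfrac1n$, hence never enters the truncated region and actually solves the problem with the \emph{true} nonlinearity $v^p$ (with perturbed data); for $n>m$ the pair $(u_m,v_m)$ is then also a solution of the $n$-th regularized system with larger data, so Lemma \ref{Comparison} applies \emph{as stated} and yields a decreasing sequence. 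What each route buys: the paper's version makes the limit passage essentially trivial and never compares solutions of different systems; yours avoids perturbing the data but pays in two places. First, Lemma \ref{Comparison} compares two solutions of the \emph{same} $(f,g)$, so to deduce $u_n\le u_{n+1}$ from $f_n\le f_{n+1}$ you need the one-sided variant for ordered nondecreasing nonlinearities (insert $f_n(v_n)-f_{n+1}(v_{n+1})\le f_{n+1}(v_n)-f_{n+1}(v_{n+1})\le L_{n+1}\,[v_n-v_{n+1}]^{+}$, with $L_{n+1}$ the Lipschitz constant of $f_{n+1}$ on $[0,M_0]$, and run the same Gronwall argument); this is a two-line patch but should be stated, since the lemma does not literally apply. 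Second, your approximations never solve the original equation, so the a.e.\ convergence of the nonlinear terms must be argued separately, which you do correctly. Neither point is a gap in substance; with the comparison variant made explicit your proof is complete.
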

\begin{proof} We use a known approximation method; for example, see \cite{Aguirre}.  Without loss of the generality, we can assume that $1\leq q$ and $0< p< 1.$ For each $n \in \mathbb{N},$ consider a nondecreasing global Lipschitz function $f_n$ such that
$$
f_n(s)=\left \{
\begin{array}{rl}
0 & \mbox{ if } s = 0,\\
s^p& \mbox{ if } s > \frac{1}{2n},\\
\end{array} \right.
$$
and $|f_n(s_1)- f_n(s_2)| \leq c_n |s_1 - s_2|$  for all  $s_1,s_2 \in [0,\infty).$ Let now see the following approximate problem of \eqref{eq2.1}.
\begin{equation}\label{Aprox}
 \left\{\begin{array}{rlll}
 u_{t}-  \mbox{div} ( \omega(x)\nabla u )  & = & t^r f_n(v)& \mbox{ in } \mathbb{R}^{N} \times (0,T),\\
  v_{t}- \mbox{div} ( \omega(x)\nabla v )   &= & t^s u^{q}& \mbox{ in } \mathbb{R}^{N} \times (0,T),\\
u(0)=  u_{0}   &    & v(0)=  v_{0}+\frac{1}{n} &  \mbox{ in } \mathbb{R}^{N}.
      \end{array}\right.
\end{equation}
Since that $f_n$ is globally Lipschitz with $f_n(0)=0,$ we can argue similarly as in the proof of Theorem \ref{Fujish3} for to obtain a unique nonnegative bounded solution $(u_n(t), v_n(t))$ of \eqref{Aprox}, which besides satisfies \eqref{Local}. Note that, from $(K_1)$ and $(K_2)$ properties, we obtain
\begin{equation}\label{Loc1}
v_k(t) \geq S(t) (v_0 + 1/k) = \int_{\mathbb{R}^N} \Gamma(x,y,t) v_0(y) dy + 1/k \geq  1/k, \mbox{ for all } k.
\end{equation}
Also, by construction, we have that if $n>m $ then $f_n(t) =f_m(t) $ for $t>1/2m.$ Then, from this and \eqref{Loc1}, we have
\begin{eqnarray*}
u_{m}(t) &=& S(t)u_0  + \displaystyle \int_0^t \sigma^r \int_{\mathbb{R}^N} \Gamma(x,y,t-\sigma) f_{m}(v_m(y,\sigma)) dy d\sigma, \\
         &=& S(t)u_0  + \displaystyle \int_0^t \sigma^r \int_{\mathbb{R}^N} \Gamma(x,y,t-\sigma) f_{n}(v_m(y,\sigma)) dy d\sigma, \\
v_{m}(t) &=& S(t)(v_0 + 1/m) + \displaystyle \int_0^t \sigma^s \int_{\mathbb{R}^N} \Gamma(x,y,t-\sigma) (u_m(y,\sigma))^q dy d\sigma.
\end{eqnarray*}
Thus, we have that $(u_m.v_m)$ is also solution of \eqref{Aprox} with initial condition $(u_0, v_0 + 1/m).$ Therefore, from Lemma \ref{Comparison}, we have $u_m \geq u_n$ and $v_m\geq v_n$ for $n>m$. That is, the sequences $\{u_n \}_{n\in \mathbb{N}}$ and $\{v_n \}_{n\in \mathbb{N}}$ are decreasing and bounded below. Thus the result follows by letting $n$ go to $\infty.$

\end{proof}
~~\\

\subsection{Global existence: Proof of Theorem \ref{Theorem3}-(ii) } ~~\\

Without loss of the generality, we can suppose that $0<p<1$ and $p\cdot q >1.$ Consider $\max \{ \|u_{0}\|_{r_{1 \star}, \infty},  \|v_{0}\|_{r_{2 \star}, \infty} \} < \delta,$ where $\delta>0$ will be chosen later small enough.

From \eqref{V1}-\eqref{V4}, we obtain the following estimates:
\begin{equation}\label{2Induc1}
p\gamma_2 = \gamma_1 + (r+1), \ q \gamma_1 = \gamma_2 + (s+1), \ p r_{1\star} > r_{2\star}, \ qr_{2\star} > r_{1\star}.
\end{equation}
Also since that $\gamma < \frac{N}{2 - \alpha},$ we have $r_{1\star},r_{2\star} >1.$

Now, similarly to the proof of Theorem \ref{Fujish3}, we define the following sequence
$$ \{(u^n, v^n)\}_{n \geq 0} $$
defined by $u^0(t) = S(t)u_{0}$, $v^0(t) = S(t)v_{0}$ and
\begin{equation} \label{UGLOBAL}
\begin{array}{ll}
u^{n} (t) = S(t)u_{0}+ \displaystyle \int_0^tS(t-\sigma) \displaystyle \sigma^{r}[v^{n-1}(\sigma)]^{p} d\sigma,
\\ \noalign{\medskip}
v^{n} (t) = S(t)v_{0}+ \displaystyle \int_0^tS(t-\sigma)  \sigma^{s}[u^{n-1}(\sigma)]^{q} d\sigma,
\end{array}
\end{equation}
for all $t>0$. Note that the sequences $\{ u^{n}\}_{n\geq0} $ and $\{ v^{n} \}_{n\geq0} $ are non-decreasing.

By induction, we prove that.
\begin{equation}\label{EEE1}
\begin{array}{rlllll}
&\| u^{n} (t)\|_{r_{1\star}, \infty } & \leq & 2 c_{\star \star} \delta, \\
\noalign{\medskip}
&\| u^{n} (t)\|_{\infty} & \leq&  2 c_{\star \star} \delta \ t^{-\frac{N}{(2 - \alpha) r_{1\star}} }, \\
\noalign{\medskip}
&\| v^{n} (t)\|_{r_{2\star}, \infty } & \leq& 2 c_{\star \star} \delta, \\
\noalign{\medskip}
&\| v^{n} (t)\|_{\infty} & \leq& 2 c_{\star \star} \delta \ t^{-\frac{N}{(2 - \alpha) r_{2\star}} }.
\end{array}
\end{equation}

From $(G_2),$ we have
\begin{equation}\label{EEE2}
\begin{array}{rlllll}
&\|u^0(t)\|_{r_{1\star}, \infty} &\leq& c_{**}  \| u_0 \|_{r_{1 \star}, \infty}, \\
\noalign{\medskip}
&\|u^0(t)\|_{\mu, \infty} &\leq& c_{**} t^{- \frac{N}{2 - \alpha} \left( \frac{1}{r_{1\star}} - \frac{1}{\mu} \right) } \| u_0 \|_{r_{1\star}, \infty}, \\
\noalign{\medskip}
&\|v^0(t)\|_{r_{2\star}, \infty} &\leq& c_{**}  \| v_0 \|_{r_{2 \star}, \infty}, \\
\noalign{\medskip}
&\|v^0(t)\|_{\mu, \infty} &\leq& c_{**} t^{- \frac{N}{2 - \alpha} \left( \frac{1}{r_{2\star}} - \frac{1}{\mu} \right) } \| v_0 \|_{r_{2\star}, \infty},
\end{array}
\end{equation}
for all $t>0$, $\mu \in [r_{1\star},\infty],$ and some constant $c_{**}>0$. This implies that (\ref{EEE1}) is held for $n = 0$.

Now we assume that (\ref{EEE1}) holds for some $n \in \mathbb{N}$. By symmetry we only prove that (\ref{EEE1}) holds for $u^{n+1}$. From \eqref{Lorentz1} and (\ref{EEE1}), we have
\begin{equation}\label{G.estimate1}
\begin{array}{ll}
\| v^n(t) \|_{\mu , \infty} &\leq \| v^n(t)\|_{r_{2\star}, \infty}^{\frac{r_{2\star}}{\mu}} \|v^n(t)\|_\infty^{1 - \frac{r_{2\star}}{\mu}}
\\
\noalign{\medskip}
& \leq 2 c_{**} \delta t^{-\frac{N}{2 - \alpha} \left( \frac{1}{r_{2\star}} - \frac{1}{\mu} \right)}
\end{array}
\end{equation}
for all $t>0$ and $r_{2\star} \leq \mu < \infty$. Also, from \eqref{2Induc1} and \eqref{G.estimate1}, we have
\begin{equation}\label{G.estimate3}
\begin{array}{ll}
\|v^n(t)^p \|_{\eta , \infty} &= \| v^n(t)\|_{\eta p, \infty}^p
\\
\noalign{\medskip}
& \leq (2 c_{**} \delta t^{- \frac{N}{2-\alpha} \left( \frac{1}{r_{2\star}} - \frac{1}{\eta p} \right)} )^p
\\
\noalign{\medskip}
& = C \delta^p t^{\frac{N}{(2-\alpha)\eta} - \frac{N}{(2 - \alpha) r_{1\star}} - (r+1)}
\end{array}
\end{equation}
for any $\eta >1$ with $r_{2\star} \leq \eta p.$ Similarly, from \eqref{2Induc1} and (\ref{EEE1}), we obtain
\begin{equation}\label{G.estimate2}
\begin{array}{ll}
\|v^{n}(t)^{p}\|_{\infty} &= \|v^{n}(t)\|_{\infty}^{p}
\\ \noalign{\medskip}
&\leq  \left( 2 c_{\star \star} \delta \ t^{-\frac{N}{(2 - \alpha) r_{2\star}} } \right)^{p}
\\ \noalign{\medskip}
&=  (2 c_{\star \star} \delta)^{p} t^{- p \gamma_{2}}
\\ \noalign{\medskip}
&= (2 c_{\star \star} \delta)^{p} t^{- \frac{N}{(2 - \alpha) r_{1\star}} - (r + 1) }
\end{array}
\end{equation}
for all $t > 0$.

Thus, by $(G_1)$, $(G_2)$, \eqref{2Induc1}, (\ref{G.estimate3})($\mbox{with } \eta=r_{1\star}$), and \eqref{G.estimate2},  we have
\begin{equation}\label{G.estimate4}
\begin{array}{rll}
\left \| \displaystyle \int_{t/2}^{t} S(t - \sigma) \sigma^{r} v^{n}(\sigma)^{p} d \sigma \right \|_{\infty} &\leq  \displaystyle \int_{t/2}^{t} \sigma^{r}  \| S(t - \sigma) v^{n}(\sigma)^{p} \|_{\infty} d \sigma
\\ \noalign{\medskip}
& \leq C \displaystyle \int_{t/2}^{t} \sigma^{r}  \| v^{n}(\sigma)^{p} \|_{\infty} d \sigma \\
\noalign{\medskip}
& \leq  C \delta^{p} t^{- \frac{N}{(2 - \alpha) r_{1\star}}}
\end{array}
\end{equation}
and
\begin{equation}\label{G,estimate5}
\begin{array}{ll}
\left\| \displaystyle \int_{\frac{t}{2}}^t S(t - \sigma) \sigma^p v^n(\sigma)^p d\sigma \right\|_{r_{1\star}, \infty} & \leq \displaystyle \int_{\frac{t}{2}}^t \sigma^r \| S(t - \sigma) v^n(\sigma)^p \|_{r_{1\star}, \infty} d\sigma
\\ \noalign{\medskip}
& \leq C \displaystyle \int_{\frac{t}{2}}^t \sigma^r \| v^n(\sigma)\|^p_{p r_{1\star}, \infty} d \sigma \\
\noalign{\medskip}
& \leq C \delta^p
\end{array}
\end{equation}
for all $t > 0.$

On the other hand, since that $ t - \sigma \geq t/2 \mbox{ for all } \sigma \in [0, t/2],$ by $(G_2)$ and \eqref{G.estimate3} (with $\eta=\eta_1$, which will be chosen later), we obtain
\begin{equation}\label{G.estimate6}
\begin{array}{rll}
&\left \|  \displaystyle \int_{0}^{\frac{t}{2}} S(t - \sigma) \sigma^{r} v^{n}(\sigma)^{p} d \sigma \right \|_{\infty}  \\
&\leq  \displaystyle \int_{0}^{\frac{t}{2}}  \| S(t - \sigma) \sigma^{r} v^{n}(\sigma)^{p} \|_{\infty} d \sigma
\\ \noalign{\medskip}
&\leq  C  \displaystyle \int_{0}^{\frac{t}{2}} (t - \sigma)^{-\frac{N}{(2 - \alpha) \eta_1}} \sigma^{r}  \| v^{n}(\sigma)^{p} \|_{\eta_1,\infty} d \sigma
\\ \noalign{\medskip}
&\leq  C \delta^{p} t^{-\frac{N}{(2- \alpha) \eta_1}} \displaystyle \int_{0}^{\frac{t}{2}} \sigma^{\frac{N}{(2 - \alpha) \eta_1}  - \frac{N}{(2 - \alpha) r_{1\star}} - 1 } d \sigma
\\ \noalign{\medskip}
&\leq  C \delta^{p} t^{-\frac{N}{(2- \alpha) r_{1 \star}}}
\end{array}
\end{equation}
for some $1 < \eta_1 < r_{1 \star}$ close enough to $r_{1 \star}$ so that $ r_{ 2\star} < \eta_1 p$ (it is possible since that $p \ r_{1 \star}> r_{ 2\star} >1 $).

Analogously (using the above $\eta_1$ again), we obtain
\begin{equation}\label{G.estimate7}
\begin{array}{ll}
& \left\| \displaystyle \int_0^{\frac{t}{2}} S(t - \sigma) \sigma^r v^n(\sigma)^p d \sigma \right\|_{r_{1\star}, \infty} \\
& \leq \displaystyle\int_0^{\frac{t}{2}} \| S(t - \sigma) \sigma^r v^n(\sigma)^p \|_{r_{1\star},\infty} d\sigma
\\
\noalign{\medskip}
& \leq C \displaystyle \int_0^{\frac{t}{2}} (t-\sigma)^{-\frac{N}{2 - \alpha} \left( \frac{1}{\eta_1} - \frac{1}{r_{1\star}} \right)} \sigma^r \|v^n(\sigma)^p \|_{\eta_1,\infty} d\sigma
\\
\noalign{\medskip}
& \leq C t^{-\frac{N}{2 - \alpha} \left( \frac{1}{\eta_1} - \frac{1}{r_{1\star}} \right)}  \displaystyle \int_0^{\frac{t}{2}} \sigma^r \|v^n(\sigma)^p \|_{\eta_1,\infty} d\sigma
\\
\noalign{\medskip}
& \leq C \delta^p
\end{array}
\end{equation}
for all $t>0$.

Then, from \eqref{UGLOBAL}, \eqref{EEE2}, \eqref{G.estimate4}, \eqref{G,estimate5}, \eqref{G.estimate6}, \eqref{G.estimate7}, and taking a $\delta > 0$ sufficiently small, we obtain
\begin{eqnarray*}
t^{\frac{N}{(2 - \alpha) r_{1 \star}}} \|u^{n+1}(t)\|_{\infty} &\leq& c_{\star \star} \delta + C \delta^{p} \leq 2  c_{\star \star} \delta
\\
\|u^{n+1}(t)\|_{r_{1 \star} , \infty} &\leq& c_{\star \star} \delta + C \delta^p \leq 2 c_{\star \star} \delta
\end{eqnarray*}
for all $t > 0,$ where $C>0$ is a constant independent of $n$, $\delta,$ and $t$. Thus, we obtain that (\ref{EEE1}) holds for all $ u^n$ with $n \in \mathbb{N}.$ Arguing similarly, we have that also (\ref{EEE1}) holds for everyone $ v^n$ with $n \in \mathbb{N}.$

Now, from (\ref{EEE1}) and arguing similarly as in the proof of Theorem \eqref{Fujish3}, we obtain that there exists a global-in-time solution (this solution is unique in the particular case when $p>1$ and $q>1$)
$$(u(t),v(t)) = \left( \lim_{n\rightarrow \infty} u^{n}(t),  \lim_{n\rightarrow \infty} v^{n}(t)  \right )$$ of (\ref{eq2.1}) such that
\begin{eqnarray*}
\|u(t)\|_{\infty} \leq 2  c_{\star \star} \delta t^{-\frac{N}{(2 - \alpha) r_{1 \star}}},  & & \|u(t)\|_{r_{1 \star},\infty} \leq  2  c_{\star \star} \delta
\\
\| v(t) \|_{\infty} \leq 2 c_{\star \star} \delta t^{- \frac{N}{(2-\alpha) r_{2 \star}}}, & & \|v(t)\|_{r_{2 \star}, \infty } \leq 2 c_{\star \star} \delta.
\end{eqnarray*}
Using this together with the upper bounded estimate in Corollary \ref{Corollary}, we obtain a constant $C_0 >0$ such that
$$\|u(t)\|_{\infty} \leq C_{0} (t+ 1)^{-\frac{N}{(2 - \alpha) r_{1 \star}}} \, \, \mbox{ and } \, \, \|v(t)\|_{\infty} \leq C_{0} (t+ 1)^{-\frac{N}{(2 - \alpha) r_{2 \star}}},$$ for $t>0$. Also, from this and by \eqref{Lorentz1}, we have
\begin{eqnarray*}
\| u(t) \|_{\mu , \infty} &\leq \| u(t) \|_{r_{1 \star} , \infty}^{\frac{r_{1 \star}}{\mu}} \| u(t)\|_\infty^{1 - \frac{r_{1 \star}}{\mu}} \\
 &\leq C_1 (t+1)^{- \frac{N}{2 - \alpha} \left( \frac{1}{r_{1 \star}} - \frac{1}{\mu} \right)}, \\
\| v(t) \|_{\mu , \infty} & \leq \| v(t) \|_{r_{2 \star} , \infty}^{\frac{r_{2 \star}}{\mu}} \| v(t)\|_\infty^{1 - \frac{r_{2 \star}}{\mu}} \\
&\leq C_1 (t+1)^{- \frac{N}{2 - \alpha} \left( \frac{1}{r_{2 \star}} - \frac{1}{\mu} \right)},
\end{eqnarray*}
for all $\mu$ such that $\max \{ r_{1 \star} , r_{2 \star} \} < \mu \leq \infty$, $t>0$ and some constant $C_1 >0$, thus the proof is concluded.



\begin{thebibliography}{9}

\bibitem{Aguirre} J. Aguirre, M. Escobedo,\emph{A Cauchy problem for $u- \Delta = u^p $: Asymptotic behavior of solutions}, Ann. Fat. Sci. Toulouse Math. 8 (1986-1987) 175-203.

\bibitem{Bass} Athreya, S. R., Borlow, M. T., Bass, R. F., Perkins, E. A., \emph{Degenerate stochastic differential equation and super-Markov chains}. Probab. Theory Related Fields. \textbf{123}(4), 484 - 520 (2002).

\bibitem{Aronson} D. G. Aronson, H. F. Weinberger,\emph{Multidimensional nonlinear diffusion arising in population genetics}, Adv. Math., \textbf{30}(1) (1978), 33-76.


\bibitem{Caffarelli} L. Caffarelli, L. Silvestre, \emph{An extension problem related to the fractional Laplacian}, Commun. Partial Differ. Equ., \textbf{32} (2007), 1245-1260.

\bibitem{Castillo} R. Castillo, M. Loayza, \emph{On the critical exponent for some semilinear reaction-diffusion systems on general domains},
J. Math. Anal. Appl 428, 1117-1134 (2015)

\bibitem{RM} R. Castillo, M. Loayza,\emph{Global existence and blowup for a coupled parabolic system with time-weighted sources on a general domain}, Z. Angew. Math. Phys., \textbf{70} (2019), 16.

\bibitem{Cao} X. Cao, X. Bai, S. Zheng,\emph{Critical Fujita curve for a semilinear parabolic system with time-weighted sources}, Appl. Anal., \textbf{93} (2014), 597-605.

\bibitem{Cui} S. Cui, \emph{Global behavior of solutions to a reaction–diffusion system}, Nonlinear Anal. 4 (2000) 351-379.

\bibitem{CazenaveHaraux} T. Cazenave, A. Haraux, \emph{An Introduction to Semilinear Evolution Equations}, Oxford Lecture Series in Mathematics and its Applications, vol. 13, The Claredon Press, Oxford University Press, New York, 1998.

\bibitem{Crank} J. Crank, \emph{The Mathematics of Diffusion}, second ed., Oxford, London, 1979.

\bibitem{Daoud} Daoud, M., Laamri, E.H.: Fractional Laplacians: A short survey. Discrete Contin. Dyn. Syst. Ser. S. \textbf{15}, 95 - 116 (2022).

\bibitem{Dong} Dong, H., Phan, T.: parabolic and elliptic equations with singular or degenerate coefficients: The Dirichlet problem. Transaction of the American Mathematical Society. \textbf{374}(9), 6611 - 6647 (2021).

\bibitem{Hamilton} Daskalapoulos, P., Hamilton, R., \emph{Regularity of the free boundary for the porous media equation}, J. Amer. Math. Soc. \textbf{11}(4), 899 - 965 (1998).

\bibitem{Epstein} Epstein, C. L., Mazzeo, R., \emph{Degenerate diffusion operators arising in population biology},  Annals of Mathematics Studies. 185. Princenton University Press, Princeton, NJ (2013).

\bibitem{Epstein1} Epstein, C. L., Pop, C. A., \emph{Transition probabilities for degenerate diffusion arising in population genetics}, Probab. Theory Related Fields. \textbf{173}(1-2), 537-603 (2019).

\bibitem{MR1088342} M. Escobedo, M. A. Herrero, \emph{Boundedness and blow up or a semilinear reaction-diffusion system}, J. Differential Equations, \textbf{89} (1991), 176-202.

\bibitem{FKS} Fabes, E. B., Kenig, C.E., Serapioni, R. P., \emph{The local regularity of solutions of degenerate elliptic equations}, Comm. Partial Differential Equations. \textbf{7}, 77-116 (1982).

\bibitem{Fujish} Y. Fujishima, T. Kawakami, Y. Sire, \emph{Critical exponent for the global existence of solutions to a semilinear heat equation with degenerate coefficients}, Calc. Var. Partial Differential Equations, \textbf{58} (2019), 62.

\bibitem{FU}  H. Fujita, \emph{On the blowing up of solutions of the Cauchy problem for $u_{t}=\Delta u+u^{1+\alpha}$}, J. Fac. Sci. Univ. Tokyo Sect. I, \textbf{13} (1966), 109-124.


\bibitem{grafakos} Grafakos, L., \emph{Classical Fourier Analysis}, vol. 249, Graduate Texts in Mathematics, Second Ed., Springer, New York (2008).

\bibitem{Heston} Heston, S., \emph{A closed-form solution for options with stochastic volatility with applications to bond and currency options}, Review of Financial Studies. \textbf{6}, 327 - 343 (1993).



\bibitem{MR919445} C. E. Guti\'errez, G. S. Nelson, \emph{Bounds for the fundamental solution of degenerate parabolic equations}, Comm. Partial Differential Equations, \textbf{13} (1988), 635-649.

\bibitem{Hayakawa}  K. Hayakawa, \emph{On nonexistence of global solutions of some semilinear parabolic differential equations}, Proc. Japan Acad., \textbf{49} (1973), 503-505.

\bibitem{Ishige} K. Ishige, T. Kawakami, M. Sier\.{z}\c{e}ga, \emph{Supersolutions for a class of nonlinear parabolic systems}, J. Differential Equations 260 (2016), 6084-6107.


\bibitem{Kamin3} S. Kamin, P. Rosenau, \emph{Propagation of thermal waves in an inhomogeneous medium}, Comm. Pure Appl. Math., \textbf{34} (1981), 831-852.

\bibitem{Kamin5} S. Kamin, P. Rosenau, \emph{Nonlinear thermal evolution in an inhomogeneous medium}, J. Math. Phys., \textbf{23} (1982), 1385-1390.

\bibitem{Klafter} Klafter, J., Sokolov, I.M., \emph{Anomalous diffusion spreads its wings}, Phys. World. {\bf 18}, 29-32 (2005).

\bibitem{KN} Kohn, J. J., Nirenberg, L., \emph{Degenerate elliptic-parabolic equations of second order}, Comm. Pure Appl. Math. \textbf{20}, 797 - 872 (1967).

\bibitem{Levine}  H. A. Levine, \emph{The role of critical exponents in blowup theorems}, SIAM Reviews, \textbf{32} (1990), 262-288.

\bibitem{STV} Y. Sire, S. Terracini, S. Vita, \emph{Liouville type theorems and regularity of solutions to degenerate or singular problems part I: even solutions}, Comm. Partial Differential Equations. \textbf{46}(2), 310 - 361 (2021).

\bibitem{Mum} B. Muckenhoupt, \emph{Weighted norm inequalities for the Hardy maximal function}, Trans. Amer. Math. Soc., \textbf{165} (1972), 207-226.

\bibitem{Murray1} J. D. Murray, \emph{Mathematical Biology I: An Introduction}, third ed. Springer, New York, 2002.

\bibitem{Murray2} J. D. Murray, \emph{Mathematical Biology II: Spatial Models and Biomedical Applications}, third ed., Springer, New York, 2003.

\bibitem{Sande} K. Nystr\"{o}m, O. Sande, \emph{Extension properties and boundary estimates for a fractional heat operator}. Nonlinear Anal. {\bf 140}, 29-37 (2016).

\bibitem{Souplet}  P. Quittner, P. Souplet, \emph{Superlinear Parabolic Problems: Blow.up, Global Existence and Steady States}, Springer Science and Business Media, 2007.

\bibitem{Renclawowicz} J. Renclawowicz,\emph{ Global existence and blow-up for a completely coupled Fujita type system}. Appl. Math. (Warsaw) 27(2), 203-218 (2000)

\bibitem{Kamin4}  P. Rosenau, S. Kamin, \emph{Nonlinear diffusion in finite mass medium}, Comm. Pure Appl. Math. \textbf{35} (1982), 113-127.

\bibitem{Snoussi} S. Snoussi, S. Tayachi,\emph{ Asymptotic self-similar behavior of solutions for a semilinear parabolic system}, Commun. Contemp. Math. 3 (2001) 363-392.




\bibitem{Umeda} N. Umeda, \emph{Blow-up and large time behavior of solutions of a weakly coupled system of reaction–diffusion equations}, Tsukuba J. Math. 27, 31-46 (2003).

\bibitem{Webb}  J. R. L. Webb, \emph{Weakly singular Gronwall inequalities and applications to fractional differential equations}, J. Math. Anal. Appl. 471, 692-711 (2019)

\bibitem{WZ1} W. Wang, X. Q. Zhao, \emph{Basic Reproduction NUmbers for Reaction-Diffusion Epidemic Models}, SIAM J. Applied Dynamical Systems, \textbf{11} (2012), 1652 - 1673.

\bibitem{WZ2} X. Wang, X.Q. Zhao, \emph{Target reproduction numbers for reaction-diffusion population models}, J. of Mathematical Biology, \textbf{81} (2020), 625 - 647.




\bibitem{Zeldovich} Y. Zeldovich, Y. Raizer, \emph{ Physics of Shock Waves and High-temperature Hydrodynamic Phenomena, Academic Press}, New York, 1966.


\end{thebibliography}
\end{document}